\documentclass[12pt,reqno]{amsart}
\usepackage{fullpage}
\usepackage[top=1.5in, bottom=2in, left=2.5in, right=2.5in]{geometry}
\usepackage{graphicx}
\usepackage[draft]{hyperref}
\usepackage{amsmath,amsopn,amssymb,amsfonts,stmaryrd}
\usepackage{verbatim}
\usepackage{amsthm}
\usepackage{mathtools}
\usepackage{color}
\usepackage{enumitem}
\usepackage[framemethod=TikZ]{mdframed}
\usepackage{bbm}
\usepackage{mathrsfs}
\usepackage{booktabs}
\usepackage{caption}
\usepackage{bm}
\usepackage{tensor}
\usepackage{cleveref}

\newcommand{\Q}{\mathbb{Q}}

\renewcommand{\H}{\mathbb{H}}

\newcommand{\CO}{\mathcal{O}}

\renewcommand{\Im}{{\rm Im}}

\newcommand{\SL}{\mathrm{SL}}

\newcommand{\zz}{\mathfrak{z}}

\newcommand{\N}{\mathbb N}
\newcommand{\C}{\mathbb C}

\theoremstyle{plain}
\newtheorem{thm}{Theorem}[section]
\newtheorem*{theorem*}{Theorem}
\newtheorem{cor}[thm]{Corollary}
\newtheorem{lem}[thm]{Lemma}
\newtheorem{prop}[thm]{Proposition}

\newtheorem*{rem}{Remark}
\newtheorem*{rems}{Remarks}

\theoremstyle{definition}

\numberwithin{equation}{section}

\newcommand{\dg}{\ast}

\newcommand{\Z}{\mathbb Z}
\newcommand{\F}{\mathbb{F}}

\newif\ifdefs
\defstrue

\ifdefs
\else
\excludecomment{discussion}
\excludecomment{extradetails}
\fi

\setlist[itemize]{noitemsep, topsep=0pt}

\allowdisplaybreaks

\makeatletter
\newcommand{\vast}{\bBigg@{2}}
\newcommand{\Vast}{\bBigg@{5}}
\@namedef{subjclassname@2020}{\textup{2020} Mathematics Subject Classification}
\makeatother

\subjclass[2020]{11F33, 11F37, 11F11}
\keywords{meromorphic modular forms, overconvergent modular forms, $p$-adic congruences}

\renewcommand{\pmod}[1]{\ \left( \mathrm{mod} \, #1 \right)}
\newcommand{\Pmod}[1]{\ ( \mathrm{mod} \, #1 )}

\newcommand{\ord}{\operatorname{ord}}

\author{Kathrin Bringmann}
\address{University of Cologne, Department of Mathematics and Computer Science, Weyertal 86-90, 50931 Cologne, Germany}
\email{kbringma@math.uni-koeln.de}
\author{Pavel Guerzhoy}
\address{Department of Mathematics, University of Hawaii, 2565 McCarthy Mall, Honolulu, HI, 96822-2273}
\email{pavel@math.hawaii.edu}
\author{Ben Kane}
\address{The University of Hong Kong, Department of Mathematics, Pokfulam, Hong Kong}
\email{bkane@hku.hk}
\author{Michael Mertens}
\address{Lehrstuhl f\"ur Algebra und Zahlentheorie, RWTH Aachen University, Pontdriesch 14/16, D-52062 Aachen}
\email{michael.helmut.mertens@rwth-aachen.de}
\author{Larry Rolen}
\address{Department of Mathematics, Vanderbilt University, Nashville, TN 37240}
\email{larry.rolen@vanderbilt.edu}

\title{On $U_p$-congruences for meromorphic modular forms via supersingularity}

\begin{document}
\date{\today}
\begin{abstract}
In this paper, we investigate congruences for meromorphic modular forms $F$  which have a pole at a single point $\zz$ in the fundamental domain of $\mathrm{SL}_2(\mathbb Z)$. For a prime $p$ with good supersingular reduction at the elliptic curve corresponding to $\zz$, we show that there exists a cusp form $f$ such that $F|U_p^m \equiv f|U_p^m \pmod{p^{\kappa_m}}$, where $\kappa_m=\alpha m -\beta$ with $\alpha$ only depending on the weight of $F$ and $\beta$ depending on $F$ and $p$ but is independent of $m$. In particular, if the space of cusp forms is trivial, then $F|U_p^m\equiv 0 \pmod{p^{\kappa_m}}$ vanishes $p$-adically to a high order. 
In order to prove these results, we use the fact that $p$ has supersingular reduction to realize $F$ as an overconvergent modular form and then utilize the theory of overconvergent forms to show the congruences.
\end{abstract}
\maketitle

\section{Introduction and statement of results}
Let $F$ be a meromorphic modular form of weight $k\in 2\N$ for $\SL_2(\Z)$ and write its Fourier expansion as\footnote{Throughout, Fourier expansions hold for $\Im(\tau)$ sufficiently large.} $F(\tau)=\sum_{n\gg -\infty} c_{F}(n) q^n$, where $q=e^{2\pi i\tau}$. Throughout, $p>3$ is prime. Define the {\it $U_p$-operator} by $F|U_p(\tau):=\sum_{n\gg -\infty} c_{F}(pn) q^n$. Congruences modulo powers of $p$ for repeated iterations of $U_p$ on $F$ have a long history for  holomorphic modular forms and for meromorphic forms with all poles at the cusps $\Q\cup\{i\infty\}$. For example, standard techniques using repeated differentiation of $\frac{E_8}{\Delta}$ yield
\begin{equation}\label{eqn:CongInfinity}
\left(\frac{E_6^3}{\Delta} +1488 E_6\right)\Big|U_{p}^m\equiv 0\pmod{p^{5m}},
\end{equation}
where $E_k(\tau):= 1- \frac{2k}{B_k} \sum_{n\ge1}\sum_{d\mid n} d^{k-1}q^n$ denotes the weight {\it $k$ Eisenstein series} and $\Delta(\tau) := \frac{E_4^3(\tau)- E_6^2(\tau)}{1728}$ is the \emph{discriminant modular form}. In this paper, we are interested in the case where  $F$ is a {\it meromorphic cusp form} with a pole at a single point $\zz$ in $\SL_2(\Z)\backslash\H$, i.e., that it is meromorphic with a pole at $\zz$ and a zero at $i\infty$.  For example, numerical calculations\footnote{Numerical calculations for this paper were done using PARI/GP  \cite{PARI2}.} suggest that the striking congruences 
\begin{align}\label{eqn:Delta/E6cong}
	\frac{\Delta}{E_6} \Big| U_p^{2m} \equiv 0 \pmod{p^{4m}},\\
\label{eqn:wt12cong}
\left(\frac{E_{4}^3\Delta}{E_6^2}-c_p\Delta\right)\Big|U_{p}^{2m}\equiv 0\pmod{p^{10m}}
\end{align}
hold for $p\equiv 3\pmod{4}$ and $m\in\N$, for some $c_p \in \Z_p$ depending on $p$. For instance, for $p=7$, calculations indicate that
\begin{multline*}
c_7\equiv 2 + 4 \cdot7^2 + 4\cdot7^4 + 5\cdot7^5 + 4\cdot7^6 + 7^7 + 6\cdot7^8 + 3\cdot7^9 + 2\cdot7^{10} + 3\cdot7^{11} + 5\cdot7^{13} + 6\cdot7^{14} \\+ 6\cdot7^{15} 
+ 6\cdot7^{16} + 6\cdot7^{19}  + 7^{20} + 4\cdot 7^{21} + 7^{22} + 4\cdot 7^{23} + 2\cdot 7^{24}\\
 + 3\cdot 7^{25} + 4\cdot 7^{26} + 6\cdot 7^{27} + 7^{28} + 6\cdot 7^{29} \pmod{7^{30}}
\end{multline*}
satisfies \eqref{eqn:wt12cong} for $m \leq 2$.

However, such congruences do not seem to occur for primes $p\equiv 1\pmod{4}$. To explain this discrepancy, we associate a point $\zz\in\SL_2(\Z)\backslash\H$ with the elliptic curve $E$ over $\C$ satisfying $j(E)=j(\zz)$, where $j:=\frac{E_4^3}{\Delta}$ is the $j$-invariant on $\H$ (we refer to \cite{Silverman,Silverman_a} for basic facts on elliptic curves). Observe that the weight six meromorphic cusp form $F_1=\frac{\Delta}{E_6}$ (as well as $F_2=\frac{E_4^3\Delta}{E_6^2} - c_p \Delta$ of weight $12$) has its only pole in the fundamental domain  $\SL_2(\Z)\backslash\H$ at the point corresponding to the elliptic curve with complex multiplication by $\Z[i]$. This curve has good supersingular reduction at every prime $p \equiv 3 \pmod 4$ and good ordinary reduction for the primes $p \equiv 1 \pmod 4$.

We next give another example of a form which numerical experiments suggest satisfies congruences similar to \eqref{eqn:Delta/E6cong} and \eqref{eqn:wt12cong}:
\begin{align} 
\label{eq:ex14_1}
\frac{\Delta E_4}{21952E_4^3- 9938375\Delta} \Big| U_5^{2m} &\equiv 0 \pmod{5^{2m}},\\
\frac{\Delta E_6}{21952E_4^3- 9938375\Delta} \Big| U_5^{2m}&\equiv 0 \pmod{5^{4m}},\nonumber
\end{align}
\begin{align} 
\frac{\Delta E_4^2}{21952E_4^3- 9938375\Delta} \Big| U_5^{2m} &\equiv 0 \pmod{5^{6m}},  \nonumber\\
\frac{\Delta E_4 E_6}{21952E_4^3- 9938375\Delta} \Big| U_5^{2m}&\equiv 0 \pmod{5^{8m}},\nonumber
\end{align}
\begin{equation} \label{eq:ex14_2}
\frac{\Delta E_4^2E_6}{21952E_4^3- 9938375\Delta} \Big| U_5^{2m} \equiv 0 \pmod{5^{12m}}.
\end{equation}
 Note that the meromorphic modular forms to which we apply the $U_p$-operator have their poles at the point corresponding to the elliptic curve of conductor $14$ (this curve is usually labeled as \verb|14C|,  with $j$-invariant $\frac{9938375}{21952}$, see e.g. \cite{Birch_Kuyk}); this elliptic curve is defined over $\Q$ and has good supersingular reduction at the prime $p=5$. 

 For a ring $R$, $\Gamma\subseteq\SL_2(\Z)$, and $k\in 2\N$, we let $S_{k}(\Gamma,R)$ denote the $R$-module of cusp forms with Fourier coefficients in $R$, omitting the dependence on $R$ if $R=\C$. We note that $S_{k}(\SL_2(\Z),\Z_p)=S_{k}(\SL_2(\Z),\Z)\otimes\Z_p$ is spanned over $\Z_p$ by a finite number of forms with integer coefficients because $S_k:=S_{k}(\SL_2(\Z),\C)$ has a basis of forms in $S_{k}(\SL_2(\Z),\Z)$ (see \cite[Theorem X.4.4]{Lang_mf}).

As noted above, the proof of \eqref{eqn:CongInfinity} takes advantage of the fact that $\frac{E_6^3}{\Delta} +1488 E_6$ is the derivative of a negative weight form. However, since differentiation increases the order of the pole at a point in $\H$, one sees immediately that $\frac{\Delta}{E_6}$ cannot be the derivative of a meromorphic modular form, and one can similarly show that the functions appearing in \eqref{eqn:Delta/E6cong}, \eqref{eqn:wt12cong}, and \eqref{eq:ex14_1}--\eqref{eq:ex14_2} also cannot be derivatives. Moreover, such results for derivatives are uniform for all primes and the observations here depend on the fact that $p$ is supersingular. These differences hint at the fact that a different approach is required for congruences involving meromorphic modular forms. Our main result is the following theorem which explains the numerical observations above.
\begin{thm} \label{thm_average_rate}
Suppose that the elliptic curve corresponding to $\zz\in \SL_2(\Z)\backslash\H$ is defined over $\Q$ and has good supersingular reduction at the prime $p\geq 5$. Let $F$ be a meromorphic cusp form of weight $k$ with Fourier coefficients in $\Z$ and whose only pole in $\SL_2(\Z)\backslash\H$ occurs at $\zz$. Then there exist $f \in S_{k}(\SL_2(\Z),\Q_p)$ and $\varepsilon\geq 0$ (depending on $F$) such that, for every $m\in\N_0$, 
\[
(F-f)|U_p^m \equiv 0 \pmod {p^{\frac{(k-2)m}2-\varepsilon}}.
\]
Moreover, for each $m$ there exists $f_m\in S_k(\SL_2(\Z),\Q)$ satisfying 
\[
f_m\equiv f\pmod{p^{\frac{(k-2)m}{2}}}\text{, } \quad
\left(F-f_m\right)|U_p^m \equiv 0 \pmod {p^{\frac{(k-2)m}2-\varepsilon}}.
\]
\end{thm}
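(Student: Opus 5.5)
The plan is to realize $F$ as a $p$-adic overconvergent modular form, which is possible because of supersingularity, and then use the contraction property of $U_p$ on overconvergent forms of weight $k$ away from the finite-slope part.

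\textbf{Step 1 (overconvergence).} The pole $\zz$ corresponds to an elliptic curve $E/\Q$ with good supersingular reduction at $p$. Hence $j(\zz)$ reduces to a supersingular $j$-invariant modulo $p$. Write $F = G/P(E_4^3,\Delta)^r$, where $G$ is a holomorphic modular form with integral coefficients and $P(E_4^3,\Delta)=a E_4^3 - b\Delta$ is the integral normalization of $j-j(\zz)$, i.e., $j-j(\zz)$ multiplied by $\Delta$ and cleared of denominators (cf.\ $21952E_4^3-9938375\Delta$). I will rewrite $1/(j-j(\zz))$ using $E_{p-1}$. Modulo $p$, the Hasse invariant $E_{p-1}$ vanishes exactly at supersingular points, so the supersingular polynomial, evaluated at $j$ and multiplied by a power of $\Delta$, divides $E_{p-1}$ modulo $p$. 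Consequently $E_{p-1}$ equals $(j-j(\zz))$ times a holomorphic form $H$, plus $p\cdot(\text{integral form})$. On the region where $|E_{p-1}|$ is $p$-adically large (the ordinary locus together with a small overconvergent neighbourhood), $F$ then becomes an $r$-overconvergent modular form of weight $k$ for some radius $r>0$. Concretely, I will write $F = \sum_n b_n/E_{p-1}^n$ with $b_n$ classical of weight $k+n(p-1)$ and $b_n\to 0$ at a geometric rate $p^{-cn}$ (Katz expansion). The constant $\varepsilon$ will be determined by the denominators and convergence rate here. If $E$ is not defined over $\Q$, or the reduction is not good, the argument breaks; this is why those hypotheses appear.

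\textbf{Step 2 (spectral decomposition of $U_p$).} On the Banach space $M_k^{\dagger}(r)$, $U_p$ is compact, following Coleman. I will split $M_k^\dagger$ into the finite-dimensional part of slope $\le \frac{k-2}{2}$ and a complement on which the slopes exceed $\frac{k-2}{2}$. The key input is the classicality result of Coleman: the overconvergent eigenforms of slope $<k-1$ are classical. At level $1$, the $p$-stabilizations of level-one eigenforms of weight $k$ have slopes $\alpha,\beta$ with $\alpha+\beta=k-1$. I will also need the fact that the noncuspidal and non-classical spectrum of $U_p$ on weight-$k$ level-$1$ overconvergent forms has slope $\ge k-1$ or exactly $0$ and $k-1$ (Eisenstein). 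Cuspidality of $F$ removes the slope-$0$ Eisenstein part, since $F$ has constant term zero, and classical cusp forms plus $p$-stabilization relate to $S_k(\SL_2(\Z))$ through $f(\tau)$ and $f(p\tau)$, which are killed or moved appropriately by $U_p$. I will then decompose $F = f^\sharp + g$, where $f^\sharp$ lies in the span of the classical eigenvectors of slope $\le \frac{k-2}{2}$ and $g$ lies in the complement. Combining $f^\sharp$ with its $V_p$-images, I get $f\in S_k(\SL_2(\Z),\Q_p)$ with $(F-f)|U_p^m$ equal to $g|U_p^m$ up to terms killed by the structure of $U_p$.

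\textbf{Step 3 (estimate).} The main obstacle is obtaining the uniform rate $\frac{k-2}{2}$ per iteration, rather than merely a rate $>\frac{k-2}{2}-\delta$. The plan is to use the norm estimate on $b_n/E_{p-1}^n$: $U_p$ of a classical weight-$(k+n(p-1))$ term gains roughly a factor $p^{(k-2)/2}$ over the supersingular region. I will try to prove this directly via Dwork's estimate on $U_p$ applied to Katz expansions, choosing the radius $r$ compatibly with $j(\zz)$ lying in a supersingular disc. The hope is to show $\|g|U_p\|\le p^{-(k-2)/2}\|g\|$ on the complement, so that after $m$ iterations the bound on $q$-coefficients gives the claimed congruence with loss $\varepsilon$ from the initial norm. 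The example congruences with $U_p^{2m}$ suggest that the natural contraction acts on $U_p^2$, which may be needed for this step.

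\textbf{Step 4 (rational approximants).} Since $S_k(\SL_2(\Z),\Z)\otimes\Z_p$ is dense in $S_k(\SL_2(\Z),\Q_p)$, for each $m$ I will choose $f_m\in S_k(\SL_2(\Z),\Q)$ with $f_m\equiv f\pmod{p^{(k-2)m/2}}$. Then
\[
(F-f_m)|U_p^m=(F-f)|U_p^m+(f-f_m)|U_p^m,
\]
and the second term is $0\pmod{p^{(k-2)m/2}}$ because $U_p$ preserves integrality. This gives the stated congruence for $F-f_m$.
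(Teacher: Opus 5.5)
Your overall architecture matches the paper's:
\begin{itemize}
\item Step~1 is equivalent to Lemma~\ref{lem:supersingular}. The paper chooses a Hasse lift $g\equiv 1\pmod p$ with $g(\zz)=0$, so $Fg^D$ is a holomorphic cusp form and $r^DF$ has a one-term Katz expansion; your geometric series in $pW/E_{p-1}$ does the same job.
\item Step~2 is the paper's combination of Coleman classicality with the truncated asymptotic expansion.
\item Step~4 is identical to the paper's.
\end{itemize}
Your remark that cuspidality of $F$ kills the slope-$0$ Eisenstein component is a point the paper leaves implicit. Proposition~\ref{thm:smallslopes} is stated with $S_k(\Gamma_0(p),\C_p)$, which is not literally true for $E_k-p^{k-1}E_k|V_p$.

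\textbf{The gap is Step~3.} This is exactly where the rate $\frac{(k-2)m}{2}$ must come from, and the mechanism you propose cannot work. Katz expansions span all of $M_k^\dagger(r)$. A gain of $p^{(k-2)/2}$ for $U_p$ on every term $b_n/E_{p-1}^n$ would therefore force $\|U_p\|\le p^{-(k-2)/2}$ on the whole space. That contradicts the slope-$0$ eigenvector $E_k-p^{k-1}E_k|V_p$. Dwork--Katz-type estimates only describe how $U_p$ moves mass between Katz terms of different index $n$ (improving the radius of overconvergence), with bounds independent of $k$. A term-by-term estimate also cannot see the Riesz complement, which is not spanned by Katz terms. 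On that complement the minimal slope controls only the spectral radius of $U_p$, not its operator norm. So the inequality $\|g|U_p\|\le p^{-(k-2)/2}\|g\|$ is neither established nor obtainable this way, and the speculation about $U_p^2$ is a red herring.

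\textbf{The missing idea is that no norm contraction is needed.} The asymptotic estimate of Gouv\^ea--Mazur (Proposition~\ref{thm:TruncatedAsymptotic}) applies to your complement component $g=F-e_{\le M}(F)$: it gives $\nu_p(g|U_p^m)\ge m(M+\nu)$ for $m\gg0$ and some $\nu>0$. With your split, $M=\frac{k-2}{2}$, this already beats $\frac{(k-2)m}{2}$. The finitely many small $m$ and the initial norm are absorbed into $\varepsilon$; this non-uniformity is exactly why the theorem carries an $\varepsilon$.

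\textbf{The real remaining work is in the finite part}, which your Step~2 only gestures at (``terms killed by the structure of $U_p$'').
\begin{itemize}
\item The level-$p$ newforms have $U_p$-eigenvalues $\pm p^{(k-2)/2}$ and are harmless.
\item For each $p$-stabilized oldform you need the quantitative un-stabilization estimate of Lemma~\ref{lem:FUprepeat}: $f_j|U_p^m\equiv\frac{\alpha_j^{m+1}}{\alpha_j-\beta_j}(f_j-\beta_jf_j|V_p)\pmod{p^{\frac{(k-1)m}{2}-\varepsilon}}$. This rests on $\ord_p(\beta_j)\ge\frac{k-1}{2}>\frac{k-2}{2}$.
\item You assert $f\in S_k(\SL_2(\Z),\Q_p)$ without justification, although $f_j,\alpha_j,\beta_j$ only lie in a finite extension. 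The paper proves the projection has coefficients in a finite Galois extension $K_p$ (Proposition~\ref{prop:ProjectField}) and then takes a normalized Galois trace. Alternatively, you could argue that the Riesz projector is defined over $\Q_p$ and that un-stabilization is Galois-equivariant.
\end{itemize}
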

\begin{rems}
\noindent

\noindent
\begin{enumerate}[leftmargin=*]
\item[\rm(1)]
Computations indicate that similar congruences hold for $p\in\{2,3\}$, but some technical modifications would be necessary to deal with these primes. 
\item[\rm(2)]
Congruences that only hold up to an $\varepsilon$-power, such as in Theorem \ref{thm_average_rate}, appear in multiple places. For instance, see \cite[Example 4.3.1]{Calegari} (our $\varepsilon$ is called $c$ there).
\item[\rm (3)]
Within the range of our numerical calculations, it appears that $\varepsilon$ is always $0$. However,  our methods do not allow us to show $\varepsilon=0$ in any specific case (see the remark below Proposition \ref{thm:TruncatedAsymptotic}). The congruences from Theorem  \ref{thm_average_rate} with $\varepsilon=0$ were proven by Paşol and Zudilin \cite{PS} for the meromorphic modular forms $\frac{\Delta}{E_4^2}$, $\frac{\Delta E_4}{E_6^2}$, and $\frac{\Delta E_6}{E_4^3}$. More recently, a family of meromorphic modular forms was investigated by Zhang \cite{Zhang} using the methods introduced in \cite{PS}. Zhang proved Theorem \ref{thm_average_rate} with $\varepsilon=0$ (in all these cases, $f=0$) for these forms in \cite[Theorems 4.7 and 6.1]{Zhang}. The methods introduced in \cite{PS} and developed in \cite{Zhang} are different from ours, and apply only to these very special meromorphic modular forms. In particular, for these forms, the congruences hold for all primes, while numerical examples show that the conditions on $p$ in Theorem \ref{thm_average_rate} are necessary in general.
\end{enumerate}
\end{rems}
As evidenced in \eqref{eqn:Delta/E6cong} and \eqref{eq:ex14_1}--\eqref{eq:ex14_2}, the congruences from Theorem \ref{thm_average_rate} are particularly nice if $S_k=\{0\}$. In this case $f=0$ in Theorem \ref{thm_average_rate}, which immediately implies the following corollary. 
\begin{cor} \label{cor_main}
Suppose that $k\in \{2,4,6,8,10,14\}$ and that the elliptic curve corresponding to $\zz\in \SL_2(\Z)\backslash\H$ has good supersingular reduction at the prime $p$. Then, for a meromorphic cusp form $F$ of weight $k$ with integral Fourier coefficients whose only pole in $\SL_2(\Z)\backslash\H$ occurs at $\zz$, there exists $\varepsilon\geq 0$ such that, for all $m\in\N_0$, 
\[
F|U_p^m  \equiv 0 \pmod {p^{\frac{(k-2)m}{2}-\varepsilon}}.
\]
\end{cor}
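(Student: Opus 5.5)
The corollary should follow from Theorem \ref{thm_average_rate} by specializing to weights with no cusp forms. For $k\in\{2,4,6,8,10,14\}$ we have $S_k=\{0\}$, hence $S_k(\SL_2(\Z),\Q_p)=\{0\}$. So the form $f$ produced by Theorem \ref{thm_average_rate} must be $f=0$, and the first congruence of that theorem reads $F|U_p^m\equiv 0 \pmod{p^{\frac{(k-2)m}{2}-\varepsilon}}$ for all $m\in\N_0$. This is exactly the claim.

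The only real work is checking that the hypotheses of Theorem \ref{thm_average_rate} hold, since the corollary does not explicitly say that the elliptic curve attached to $\zz$ is defined over $\Q$. I would deduce this from the integrality of $F$, as follows.

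\begin{enumerate}[leftmargin=*]
\item Since $F$ is a meromorphic modular form of weight $k$ with rational Fourier coefficients, write $F=\Delta^{a}G/H$, where $G$ and $H$ are holomorphic modular forms with rational coefficients.
\item Equivalently, write $F/(\text{a suitable holomorphic form})$ as a rational function of $j$ with coefficients in $\Q$.
\item The poles of $F$ in $\H$ then lie at the $\tau$ with $P(j(\tau))=0$ for some polynomial $P\in\Q[X]$, after cancelling common factors.
\item The zero set of such a $P$ is stable under $\mathrm{Gal}(\overline{\Q}/\Q)$.
\item Since $F$ has a pole only at the single orbit $\zz$, the value $j(\zz)$ has no Galois conjugates other than itself, so $j(\zz)\in\Q$.
\end{enumerate}

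One point needs care here: the orders of vanishing of $E_4$ at $\rho$ and $E_6$ at $i$ must be accounted for. The cleanest way is to work with the divisor of $F$ in terms of $j$, where the multiplicities are Galois invariant.

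With $j(\zz)\in\Q$, we may choose a model of $E$ over $\Q$. Good supersingular reduction at $p$ depends only on $j(\zz)$ modulo $p$ after taking a suitable twist, and potentially good reduction requires $j(\zz)$ to be $p$-integral. So this hypothesis transfers to the $\Q$-model; if necessary we replace $E$ by a quadratic twist with good reduction at $p$.

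Finally, $p\ge 5$ is implicit, as in the standing assumption $p>3$. Theorem \ref{thm_average_rate} then applies and gives the result, with $\varepsilon$ depending on $F$ and $p$. I expect the only mildly delicate step to be the Galois-descent argument forcing $j(\zz)\in\Q$. If the authors intend "defined over $\Q$" as an implicit standing hypothesis, the corollary is literally the case $f=0$ of Theorem \ref{thm_average_rate}.
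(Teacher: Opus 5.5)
Your argument is the paper's own: for these weights $S_k=\{0\}$, so the form $f$ in Theorem \ref{thm_average_rate} must be $0$, and the theorem's first congruence is exactly the claim. The paper simply lets the corollary inherit the theorem's hypotheses ($p\geq 5$ and $\zz$ corresponding to a curve over $\Q$). Your Galois-descent argument that $j(\zz)\in\Q$ is sound and goes beyond what the paper provides. However, your remark that passing to a quadratic twist always yields a $\Q$-model with good reduction at $p$ is false in general: if $j(\zz)\equiv 0\pmod{p}$ with $\ord_p(j(\zz))\not\equiv 0\pmod{3}$, no curve over $\Q_p$ with that $j$-invariant has good reduction. That step should therefore be replaced by reading the hypotheses as inherited from the theorem, which your final sentence already allows.
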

\begin{rem}
Corollary \ref{cor_main} implies that, for $p \equiv 3 \pmod 4$, we have 
\begin{equation*}
	\frac\Delta{E_6} \big| U_p^{2m} \equiv 0 \pmod{p^{4m-\varepsilon}}.
\end{equation*}
Up to the $\varepsilon$ in the exponent, this agrees with the congruences experimentally observed in \eqref{eqn:Delta/E6cong}. Similarly, Corollary \ref{cor_main} explains \eqref{eq:ex14_1} and \eqref{eq:ex14_2}.
\end{rem}

The paper is organized as follows.  In Section \ref{sec:prelims}, we recognize the meromorphic modular form $F$ from Theorem \ref{thm_average_rate} as an overconvergent modular form and recall basic facts from the theory of overconvergent modular forms. In Section \ref{sec:thmaverage}, we prove Theorem \ref{thm_average_rate} and Corollary \ref{cor_main}.

\section*{Acknowledgements}
The first author has received funding from the European Research Council (ERC) under the European Union’s Horizon 2020 research and innovation programme (grant agreement No. 101001179). The research of the third author was supported by grants from the Research Grants Council of the Hong Kong SAR, China (project numbers HKU 17314122 and HKU 17305923). The fifth author was supported by a grant from the Simons Foundation (853830, LR).

\section{Preliminaries}\label{sec:prelims}
\subsection{Meromorphic modular forms}
For $k\in 2\Z$, $\gamma=\left(\begin{smallmatrix}a&b\\c&d\end{smallmatrix}\right)\in\SL_2(\Z)$, and $F\colon\mathbb H\rightarrow\mathbb C$, we define the   \begin{it}weight $k$ slash operator\end{it} by
\[
F\big|_{k}\gamma(\tau):=(c\tau+d)^{-k}F(\gamma \tau).
\]
For $\Gamma\subseteq \SL_2(\Z)$, a meromorphic function $F\colon\H\to\C$ is a \begin{it}meromorphic modular form of weight $k$ on $\Gamma$\end{it} if $F|_{k}\gamma=F$ for all $\gamma\in\Gamma$ and, for each $\gamma\in\SL_2(\Z)$, the Fourier expansions of $F$ at the \begin{it}cusp\end{it} $\sigma:=\gamma^{-1}i\infty$ has the shape 
\[
F|_k\gamma(\tau)=\sum_{n\gg -\infty}c_{F,\sigma}(n) q^{\frac{n}{N_{\sigma}}},
\]
where $N_{\sigma}$ is the \begin{it}cusp width\end{it} of $\sigma$. 
If $F$ is also holomorphic on $\H$ and $c_{F,\sigma}(n)=0$ for all $n<0$ and cusps $\sigma$, then we call $F$ a \begin{it}holomorphic modular form (of weight $k$ on $\Gamma$)\end{it}. We denote the space of holomorphic modular forms of weight $k$ by $M_k(\Gamma)$. If $F\in M_k(\Gamma)$ and $c_{F,\sigma}(0)=0$ for all $\sigma$, then we call $F$ a \begin{it}cusp form (of weight $k$ on $\Gamma$)\end{it}. 

We also require the \emph{$V_p$-operator} acting on the Fourier expansion of a weight $k$ meromorphic modular form $F(\tau)=\sum_{n\gg -\infty} c_F(n)q^n$ by
\[
F|V_p(\tau):=\sum_{n\gg -\infty}c(n)q^{pn}.
\]
We also use the \emph{Hecke operators} 
\[
F|T_p(\tau):=\sum_{n\gg -\infty}\left(c(pn)+p^{k-1}c\left(\frac{n}{p}\right)\right)q^n=F|U_p(\tau)+p^{k-1}F|V_p(\tau).
\]
\subsection{Overconvergent modular forms}
Although Theorem \ref{thm_average_rate} only involves congruences between meromorphic modular forms and cusp forms, we require a larger space to investigate the congruences between them. Namely, under the conditions of Theorem \ref{thm_average_rate}, we embed them into the space of so-called overconvergent modular forms because this space has a nice spectral theory under $U_p$ that can be exploited to prove congruences. We first recall the definition of these objects.

We let $\C_p$ denote the $p$-adic completion of the algebraic closure of $\Q_p$ and $\CO_p := \{ z \in \C_p  :  \ord_p(z)\geq 0 \} \subset \C_p$ the {\it ring of integers} in $\C_p$. Recall that the space $M_k^{\dg}(\CO_p, r)$ of overconvergent forms of level $N$ is defined as the space of ``functions'' on certain ``test objects'' of level $N$, which in the case of trivial (see \cite[Definition 2]{Gouvea_cont}) auxiliary level $N=1$ are pairs $(E, \omega)$, where $E$ is an elliptic curve defined over $\CO_p$, and $\omega$ is the (unique up to a constant multiple) holomorphic differential on $E$. We refer the reader to \cite[Section 2]{Gouvea_cont} for the definitions and further references, and to the more recent survey \cite{Calegari} for an enlightening discussion supplied with an abundance of explicit examples. 

We next describe a growth rate related to overconvergent modular forms. For this, we require a lifting $g$ of the Hasse invariant to characteristic zero, which we next describe. Classically, the Hasse invariant is defined (see e.g. \cite[Section V.3]{Silverman}) for elliptic curves over finite fields, taking value one for ordinary elliptic curves and zero for supersingular elliptic curves. One extends this definition to elliptic curves over characteristic $p$ rings, and modifies it (see e.g. \cite[Section 1.6]{Calegari}) in a way such that the Hasse invariant becomes an algebraic modular form (in the sense of Katz, in characteristic $p$) of weight $p-1$ for ${\rm SL}_2(\Z)$. A seminal calculation of Deligne (see e.g. \cite[Theorem 1.7.2]{Calegari}) showed that the $q$-expansion of this modular form $A$ is simply $A(T(q),\omega_{\operatorname{can}})=1$, where $T(q)$ is the Tate curve defined in \cite[Section 1.2.26]{Calegari} and $\omega_{\operatorname{can}}$ is the canonical differential. Note that, by the $q$-expansion principle, this condition determines the Hasse invariant (as a Katz modular form in characteristic $p$) uniquely.  A (complex-analytic, therefore algebraic) modular form $g$ of weight $p-1$ for the full modular group ${\rm SL}_2(\Z)$ such that $g \equiv 1 \pmod {p}$ as $q$-expansions is called a \begin{it}lifting of the Hasse invariant\end{it} (to characteristic zero). Initially, for $p \geq 5$, the Eisenstein series $E_{p-1}$ was chosen as a lifting of the Hasse invariant (see e.g. \cite[Theorem 1.8.1]{Calegari}), but the theory remains unchanged if $E_{p-1}$ is replaced by an arbitrary lifting $g$.

 The domain of an overconvergent form of growth rate $r$ (being overconvergent is equivalent to $\ord_p(r)>0$) consists of those pairs $(E,\omega)$ for which (see \cite[Definition 2]{Gouvea_cont} there exists $Y \in \CO_p$ such that $YG(E,\omega) = r$.
This condition can be rewritten as 
\[
\ord_p(G(E,\omega)) \leq \ord_p(r).
\]
In other words, the domain of an overconvergent form of growth rate $r$ consists of all pairs $(E,\omega)$, with discs around the supersingular points (those with $E$ supersingular) cut out. We use the modular form $g$ of weight $p-1$ constructed in the following lemma as our choice of a lifting of the Hasse invariant.
\begin{lem}\label{lem:supersingular}
	Suppose that for $\zz\in\SL_2(\Z)\backslash \H$, the elliptic curve $E$ corresponding to $\zz$ is defined over $\Q$. Let $p\geq 5$ be a  prime such that $E$ has good supersingular reduction at $p$. Then there exists $g\in M_{p-1}$ with Fourier coefficients in $\overline{\Q}$ such that the Fourier expansion satisfies $g(\tau)\equiv 1\pmod{p}$ and $g(\zz) = 0$.
\end{lem}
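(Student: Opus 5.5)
Starting from the Eisenstein series $E_{p-1}$, which satisfies $E_{p-1}\equiv 1\pmod p$ for $p\ge5$ by the von Staudt--Clausen theorem, I would correct it by a form that is divisible by $p$ as a $q$-expansion so that the sum vanishes at $\zz$. Since $p-1\geq 4$ and $p-1$ is even, every $h\in M_{p-1}$ can be written as $h=\sum_{4a+6b=p-1}\lambda_{a,b}E_4^aE_6^b$. In particular $E_{p-1}$ has such an expansion, and by a classical theorem of Deligne (going back to Igusa and Serre) its reduction modulo $p$, viewed as a polynomial in $j$, has the supersingular $j$-invariants in characteristic $p$ as its roots. Since $E$ has good supersingular reduction at $p$, the residue $\overline{j(\zz)}\in\overline{\F}_p$ is a supersingular $j$-invariant. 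Here $j(\zz)\in\Q$ is $p$-integral because the reduction is good.

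\textbf{Step 1.} I would treat first the generic case $j(\zz)\neq 0,1728$, so that $E_4(\zz)E_6(\zz)\ne0$. I would take a cusp form $\Delta^{\ell}h'$ with $\ell\ge1$, or more simply any $h\in M_{p-1}$ with rational $p$-integral coefficients whose value at $\zz$ is nonzero modulo $p$. To make "value at $\zz$" algebraic, I would normalize by a fixed nonvanishing form of weight $p-1$. For instance, I would set $R:=E_4^{a_0}E_6^{b_0}\Delta^{c}$ with $4a_0+6b_0+12c=p-1$, or simply work with ratios $h/E_4^aE_6^b$. Ratios of weight $p-1$ forms are rational functions of $j$, so their values at $\zz$ lie in $\Q$ or $\overline{\Q}$.

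\textbf{Step 2.} I would define
\[
g:=E_{p-1}-\frac{E_{p-1}(\zz)}{h(\zz)}\,h,
\]
where the ratio $E_{p-1}(\zz)/h(\zz)$ is interpreted as the value at $\zz$ of the modular function $E_{p-1}/h$. This ratio is a well-defined algebraic number, since $E_{p-1}/h$ is a rational function of $j$ with $\Q$-coefficients and $j(\zz)\in\Q$. By construction $g(\zz)=0$ and $g\in M_{p-1}$ has algebraic coefficients.

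\textbf{Step 3.} It remains to see that the correction is $\equiv0\pmod p$, that is, $\ord_p\bigl((E_{p-1}/h)(\zz)\bigr)\ge1$ with respect to a chosen embedding $\overline{\Q}\hookrightarrow\overline{\Q}_p$. I would write $E_{p-1}/h=P(j)/Q(j)$ with $P,Q\in\Z_{(p)}[j]$, chosen so that $h$ corresponds to $Q$ with $\overline{Q}(\overline{j(\zz)})\neq0$. Then $\overline{P}$ is, up to powers of $j$ and $j-1728$, the supersingular polynomial. Hence $\overline P(\overline{j(\zz)})=0$, so $P(j(\zz))\equiv0\pmod p$ while $Q(j(\zz))$ is a $p$-adic unit. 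The coefficients of $h$ are $p$-integral, so the correction term is divisible by $p$ and $g\equiv E_{p-1}\equiv1\pmod p$.

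\textbf{Special cases.} When $j(\zz)\in\{0,1728\}$, the form $E_{p-1}$ already carries the factor $E_4$ or $E_6$ exactly when $\zz$ is supersingular. This happens for $p\equiv2\pmod3$ and $p\equiv3\pmod4$ respectively. In that case $E_{p-1}(\zz)=0$ and I would take $g=E_{p-1}$.

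\textbf{Main obstacle.} The main difficulty is the bookkeeping in choosing $h$ with $\overline{Q}(\overline{j(\zz)})\ne0$ while keeping $p$-integrality. This is the step where Deligne's identification of $\overline{E_{p-1}}$ with the Hasse invariant, and hence with the supersingular polynomial, is used. A choice that should work is $h=E_4^aE_6^b$ for a pair with $4a+6b=p-1$, since in the generic case this form is nonvanishing at $\zz$ modulo $p$. If needed, one can instead use $h=E_4^aE_6^b+\Delta\cdot(\text{something})$ to avoid accidental vanishing.
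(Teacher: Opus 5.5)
Where your argument applies, it is correct and takes a different route from the paper's. The paper writes $g$ as an explicit product $\Delta^mE_4^\delta E_6^\epsilon\,(j-j(\zz))\prod(j-j(E))$. It starts from the Kaneko--Zagier factorization of $\overline{E_{p-1}}$ into supersingular linear factors, replaces the factor of $\overline{j(\zz)}$ by the exact factor $j-j(\zz)$, and lifts the other supersingular $j$-invariants to $\overline{\Q}$. You instead make a rank-one correction $g=E_{p-1}-c\,h$ with $c=(E_{p-1}/h)(\zz)$, and you only need $\ord_p(c)\geq 1$.

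Concretely, for $h=E_4^aE_6^b$ one gets $E_{p-1}/h=\mathcal{E}_{p-1}(j)/(j^s(j-1728)^t)$ with $3s=a-\delta$, $2t=b-\epsilon$, $s+t=m$, $s,t\geq 0$. Both proofs rest on the same input. Your version has two advantages:
\begin{itemize}
\item Your $g$ has rational $p$-integral coefficients, so you never lift $j$-invariants from $\F_{p^2}$ or pick lifts compatible with an embedding $\overline{\Q}\hookrightarrow\overline{\Q}_p$. The paper passes over this point, and later it treats $g$ as integral.
\item The hypothesis ``defined over $\Q$'' is used transparently: a $p$-integral rational number with zero reduction has valuation at least $1$.
\end{itemize}

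The gap is in your case distinction. Step 3 needs $\overline{j(\zz)}\notin\{0,1728\}$ in $\F_p$, not merely $j(\zz)\notin\{0,1728\}$. Suppose $j(\zz)\neq 0$ but $p\mid j(\zz)$. Since $\overline{j(\zz)}=0$ is supersingular, $p\equiv 2\pmod 3$, which means $\delta=1$. The root $0$ of the squarefree supersingular polynomial is then used up by the factor $j^\delta$, so $\overline{\mathcal{E}_{p-1}}(0)\neq 0$. Hence your inference ``$\overline P(\overline{j(\zz)})=0$'' is false, and the same happens for $j(\zz)\equiv 1728$ with $p\equiv 3\pmod 4$.

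No choice of $h$ can repair this, because the statement itself fails in this case. Write any $g\in M_{p-1}$ as $\Delta^mE_4^\delta E_6^\epsilon R(j)$ with $\deg R\leq m$. The $q$-expansions of $\Delta^mE_4^\delta E_6^\epsilon j^i$ are unitriangular over $\Z$, so $g\equiv E_{p-1}\pmod p$ forces $\overline{R}=\overline{\mathcal{E}_{p-1}}$. Since $E_4E_6\Delta$ does not vanish at $\zz$, $g(\zz)=0$ forces $R(j(\zz))=0$. Reducing gives $\overline{\mathcal{E}_{p-1}}(0)=0$, a contradiction.

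The smallest instance is $p=5$. There $M_4=\C E_4$ and $E_4(\zz)\neq 0$, yet $y^2=x^3+5x+1$ has good supersingular reduction $y^2=x^3+1$ at $5$ and $j=864000/527\neq 0$.

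The paper's proof has the same hidden assumption: its factorization $j^\delta(j-1728)^\epsilon(j-j(\zz))\prod(\cdots)$ double-counts when $j(\zz)\equiv 0,1728\pmod p$. Its own example 14C at $p=5$, with $j=(215/28)^3\equiv 0\pmod 5$, falls exactly into this case. You should therefore add the hypothesis ``$j(\zz)\in\{0,1728\}$ or $j(\zz)\not\equiv 0,1728\pmod p$''. Under that hypothesis your proof is complete.
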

\begin{proof}
Write $p-1=12m+4\delta+6\epsilon$ with  $m,\delta\in\N_0$, $\delta\in\{0,1,2\}$, and $\epsilon\in\{0,1\}$ (this is possible because $p\geq 5$). By the valence formula,
 \begin{equation}\label{eqn:modularfunction}
\frac{E_{p-1}}{\Delta^mE_4^\delta E_6^{\epsilon}}
\end{equation}
is a weakly holomorphic modular function whose only possible pole is at $i\infty$. The $j$-function $j:=\frac{E_4^3}{\Delta}$ is a Hauptmodul for $\SL_2(\Z)$, and by \cite[(2) and Theorem 1]{KZ} there exists a rational polynomial $\mathcal{E}_{p-1}$ with $p$-integral coefficients (it is in general a rational function for a meromorphic function, but it is a polynomial because the only possible pole of \eqref{eqn:modularfunction} is at $i\infty$) such that
\[
E_{p-1}=\Delta^mE_4^\delta E_6^\epsilon \mathcal{E}_{p-1}(j).
\]

By \cite[Theorem 1]{KZ} we have\footnote{This is written as a congruence between polynomials in \cite[Theorem 1]{KZ}, but this is the same because we have characteristic $p$ in $\F_p$.}
\rm
\begin{equation}\label{eqn:SSproduct}
\pm j^\delta (j-1728)^{\epsilon} \mathcal{E}_{p-1}(j)=\prod_{\substack{ E/\overline{\mathbb F}_p \\ E\textrm{ supersingular}}}(j-j(E))\in\F_p[j].
\end{equation}
Since the $j$-function induces a bijection between $\SL_2(\Z)\backslash\overline{\H}$ (where $\overline{\H}:=\H\cup\Q\cup\{i\infty\}$) and $\overline{\C}:=\C\cup\{\infty\}$, there exists precisely one $z\in \SL_2(\Z)\backslash\H$ with $j(z)=j(\zz)$, and by assumption the corresponding $E$ is supersingular. Moreover, as noted below \cite[(2)]{KZ}, the product on the right-hand side of \eqref{eqn:SSproduct} is divisible by $j^\delta (j-1728)^\epsilon$, so we may write
\[
\prod_{\substack{ E/\overline{\mathbb F}_p \\ E\textrm{ supersingular}}}(j-j(E))=j^\delta (j-1728)^\epsilon (j-j(\zz)) \prod_{\substack{ E/\overline{\mathbb F}_p \\ E\textrm{ supersingular}\\ j(E)\not\equiv 0,1728,j(\zz)\pmod{p}}}(j-j(E)).
\]
Hence
\begin{align*}
g:=&\ \Delta^mE_4^\delta E_6^\epsilon (j-j(\mathfrak z))\!\!\!\prod_{\substack{E/\overline{\mathbb F}_p \\ E\textrm{ supersingular} \\ j(E)\not\equiv 0,1728,j(\mathfrak z)\pmod{p}}} \!\!(j-j(E))
\equiv \Delta^mE_4^\delta E_6^\epsilon\mathcal{E}_{p-1}(j)
\equiv E_{p-1}\\
\equiv&\  1\pmod{p}.
\end{align*}
Note that $g$ has Fourier coefficients in $\overline{\Q}$ because $j(\zz)$ and $j(E)$ are all in $\overline{\Q}$, while the Fourier coefficients of $j$, $\Delta$, $E_4$, and $E_6$ are integers. Moreover, $g(\zz)=0$ because of the factor $ j-j(\zz)$.\qedhere
\end{proof}
Throughout the paper, we replace $E_{p-1}$ in the standard definitions related to overconvergent modular forms with the lifting $g$ of the Hasse invariant from Lemma \ref{lem:supersingular}. We let $M^{\dg}_k(r):=M_k^{\dg}(\CO_p, r)$ be the space of $r$-overconvergent modular forms of weight $k$ and trivial level. Following \cite[p. 89]{Gouvea_cont}, observe that these are the formal Katz expansions
\begin{equation}\label{eqn:KatzExpansion}
M_k^{\dg}(r) = \left\{ \sum_{m\ge0} r^m b_m(E,\omega) g^{-m}(E,\omega) \right\}
\end{equation}
with certain holomorphic modular forms $b_m$ (defined over $\CO_p$ as algebraic modular forms) satisfying $b_m \rightarrow 0$ as $m \rightarrow \infty$. 
While this condition on $b_m$ already guarantees the convergence of the Katz expansion \eqref{eqn:KatzExpansion}, the presence of $r$ with $\ord_p(r)>0$ makes the convergence even stronger, which is the meaning of the term overconvergent.

We next recall some properties of $M_k^{\dg}( r)$.
Assume  that\footnote{The space $M_k^{\dg}(r)$ only depends on the $p$-adic order of $r$, not on $r$ itself (see \cite[p. 90]{Gouvea_cont}). This leads to a discrepancy in notation in the literature where sometimes $r$ is written for $\ord_p(r) \in \Q$ (see e.g. \cite{Calegari}).}
\begin{equation} \label{eq_growth_inequ}
\frac{1}{p+1} < \ord_p(r) < \frac{p}{p+1}.
\end{equation}
Given \eqref{eq_growth_inequ}, the space $M_{k}^{\dg}(r)$ admits a spectral decomposition under $U_p$, with the kernel being trivial by the left inequality (see \cite[Lemma 3.8.5]{Calegari}) and the right inequality implies that $U_p$ is completely continuous (see e.g. \cite[Definition 3.5.1]{Calegari} for a discussion and \cite[Proposition 1]{Gouvea_cont} for the statement itself). General theory of $p$-adic Banach spaces then implies that  $U_p$ has a spectrum of non-zero (generalized) eigenvalues 
\[
\ord_p\left(\lambda_1\right) \leq \ord_p\left(\lambda_2\right) \leq \ldots
\]
with $\ord_p(\lambda_m)\to\infty$ as $m \rightarrow \infty$. The quantities $\ord_p(\lambda_m)$ are called {\sl slopes}, and $\mathcal{Y}=\{y_m\}=\{\ord_p(\lambda_m)\}$ is the {\it slope sequence} (these are indeed the slopes of the characteristic power series of $U_p$ acting on  $M_k^\dg(r)$). The sequence (and the eigenvalues themselves) is independent of $r$ if \eqref{eq_growth_inequ} is satisfied. The numbers $y_j$ are non-negative rational numbers (see \cite[Section 3]{Gouvea_Mazur}). 

For every slope $y \in \mathcal{Y}=\{y_j\}$, consider the subspace $B^{[y]} \subset M_k^\dg(r)$ of forms of slope $y$ (see \cite[p520]{Gouvea_Mazur} for a definition). Since $U_p$ is completely continuous, $B^{[y]}$ is finite-dimensional and there exists a continuous projection $e_y\colon M_k^\dg(r) \rightarrow B^{[y]}. \text{ For } H \in M_k^\dg(r)$, the (formal as it may not converge) infinite series $\sum_{y \in \mathcal{Y}}  e_y(H)$ is called the {\it asymptotic expansion} of $H$. By \cite[Proposition 1]{Gouvea_Mazur}, for $H\in M_k^{\dg}(r)$, the truncated asymptotic expansion
\begin{equation} \label{eq:tr_as}
e_{\leq M}(H):=\sum_{\substack{y\in\mathcal{Y}\\ y\leq M}} e_y(H)
\end{equation}
closely approximates $H$ under repeated actions of $U_p$. Letting $\nu_p$ denote the $p$-adic norm in the Banach space $M_k^{\dg}(r)$, \cite[Proposition 1]{Gouvea_Mazur} implies the following.
\begin{prop}\label{thm:TruncatedAsymptotic}
Let $F\in M_k^{\dg}(r)$, where $r$ satisfies \eqref{eq_growth_inequ} and $M\geq 0$. Then there exists a constant $\nu>0$ for which
\begin{equation*} 
\nu_p\left( \left(F- e_{\leq M}(F)\right)|U_p^m\right) \geq  m(M+\nu)\quad\text{for } m \gg 0.
\end{equation*}
In particular,
\[
F\big|U_p^m\equiv e_{\leq M}(F)\big|U_p^m\pmod{p^{m(M+\nu)}}\quad\text{for } m \gg 0.
\]
\end{prop}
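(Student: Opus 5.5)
The plan is to deduce Proposition \ref{thm:TruncatedAsymptotic} from the Gouv\^ea--Mazur framework by a spectral-gap argument for the completely continuous operator $U_p$ on the Banach space $M_k^{\dg}(r)$. First, choose $\nu>0$ smaller than the gap between $M$ and the next slope. Since $\ord_p(\lambda_m)\to\infty$, the slope sequence $\mathcal{Y}$ is discrete with finitely many terms $\le M$. Let $y^+$ be the smallest slope strictly larger than $M$, or $+\infty$ if there is none. Set $\nu:=\tfrac12(y^+-M)$ if $y^+<\infty$, and $\nu:=1$ otherwise.

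Next, use the Riesz theory for completely continuous operators (Serre's theory, as used in \cite{Gouvea_Mazur}). The space splits as
\[
M_k^{\dg}(r)=B^{\le M}\oplus N^{>M},
\]
with both summands $U_p$-stable and closed. Here $B^{\le M}=\bigoplus_{y\le M}B^{[y]}$ is finite-dimensional and $e_{\le M}$ is the continuous projection onto it. The complement $N^{>M}$ is the one on which the characteristic series of $U_p$ has only slopes $>M$. Thus $H:=F-e_{\le M}(F)$ lies in $N^{>M}$, and it suffices to bound $U_p^m$ on $N^{>M}$.

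The key step is the estimate on $N^{>M}$: every eigenvalue of $U_p$ restricted there has valuation at least $y^+>M+\nu$. Equivalently, the spectral radius of $p^{-(M+\nu)}U_p$ restricted to $N^{>M}$ is $<1$, so its iterates tend to $0$ in operator norm. Concretely, I would approximate $U_p$ on $N^{>M}$ by a finite-rank operator $R$ with $\nu_p(U_p-R)$ arbitrarily large, using complete continuity together with the right inequality in \eqref{eq_growth_inequ}. The finite-dimensional piece is handled by linear algebra, using that all eigenvalues there have $\ord_p\ge y^+$. This gives $\|U_p^m|_{N^{>M}}\|\le C\,p^{-m(M+\nu')}$ for some $\nu'$ with $\nu<\nu'<y^+-M$. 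The constant $C$ is then absorbed for $m\gg0$, which is exactly why the statement reads ``for $m\gg0$'' and why $\varepsilon$ appears later. Applying this to $H$ gives $\nu_p(H|U_p^m)\ge m(M+\nu)$ for $m\gg0$.

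Finally, I translate the norm bound into the congruence. The Banach norm on Katz expansions \eqref{eqn:KatzExpansion} dominates the $p$-adic valuation of the $q$-expansion coefficients, since $g\equiv1\pmod p$ and $\ord_p(r)>0$. Hence $\nu_p(G)\ge N$ implies $G\equiv0\pmod{p^N}$ coefficientwise. The main obstacle is the operator-norm estimate on $N^{>M}$, namely controlling the non-semisimple (generalized eigenvector) part uniformly. In practice I would cite \cite[Proposition 1]{Gouvea_Mazur} for it, after checking that their hypotheses hold with $g$ in place of $E_{p-1}$. They do, since the theory only uses that $g$ lifts the Hasse invariant.
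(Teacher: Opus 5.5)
Your proposal ends up where the paper does: the paper gives no argument beyond reading the statement off \cite[Proposition 1]{Gouvea_Mazur}. Your closing step, that the Banach norm of a Katz expansion bounds the $q$-expansion valuation because $g\equiv 1\pmod{p}$ and $\ord_p(r)>0$, correctly supplies the passage to the coefficientwise congruence, which the paper leaves implicit.

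If you wanted to avoid the citation, note that your finite-rank sketch fails as written. The eigenvalues of a finite-rank approximant $R$ of $U_p|_{N^{>M}}$ need not have valuation at least $y^+$, and powers of $U_p=R+S$ are not controlled by powers of $R$ alone. The mechanism that works is Serre's resolvent identity $(1-tU_p)^{-1}=Q(t)/P(t)$ on $N^{>M}$, where $Q$ is an entire operator-valued series and $P(t)=\det(1-tU_p|_{N^{>M}})$ has no zeros on $\ord_p(t)\geq -(M+\nu')$. This forces $\sum_m t^mU_p^m$ to converge there, hence $\nu_p(H|U_p^m)\geq m(M+\nu')-C$, which gives the claim for $m\gg 0$.
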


\begin{rem}
The expansions we consider here are only asymptotic;  the nature of the method used does not allow for an improvement $m>0$ instead of merely $m \gg 0$. That is one reason why specific congruences coming from computer calculations are slightly stronger than those in Theorem \ref{thm_average_rate}. In other words, our methods indicate the rate of convergence correctly, while they fail to prove the $\varepsilon=0$ observed in computer calculations.
\end{rem}

In addition to bounding the rate at which $e_{\leq M}(F)|U_p^m$ converges to $F|U_p^m$ as a function of $m$, we next show that Proposition \ref{thm:TruncatedAsymptotic} gives us some control over the field containing $e_{\leq M}(F)$. 
\begin{prop}\label{prop:ProjectField}
Let $M\geq 0$ and let $K/\Q_p$ be  a finite extension. Suppose that there exist eigenfunctions $f_1,\dots,f_d$ under $U_p$ which span $B^{[y]}$ for every slope $y\leq M$ and which have coefficients and eigenvalues in a finite extension $K/\Q_p$. Then, for any $F\in M_k^\dg(r)$ whose coefficients also lie in $K$, the truncated asymptotic expansion $e_{\leq M}(F)$ has coefficients in $K$.
\end{prop}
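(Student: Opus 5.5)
Since each generalized eigenspace $B^{[y]}$ with $y\le M$ is finite-dimensional and $U_p$ is completely continuous, I will write $e_{\le M}(F)$ explicitly as a linear combination of the eigenfunctions and then show that the coefficients of this combination lie in $K$. Set $V:=\bigoplus_{y\le M}B^{[y]}$. By hypothesis $V$ is spanned by $f_1,\dots,f_d$, with $f_j|U_p=\lambda_j f_j$, and all $f_j$ and $\lambda_j$ have coefficients in $K$. (The hypothesis says the $f_j$ are eigenfunctions spanning $V$, so $U_p$ is diagonalizable on $V$; I will use that.) After discarding dependent ones, I may assume the $f_j$ form a basis. Then $e_{\le M}(F)=\sum_j a_jf_j$ for unique $a_j\in\C_p$, and it suffices to show that every $a_j\in K$.

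To get at the $a_j$ I will use Proposition \ref{thm:TruncatedAsymptotic} together with $q$-expansions. Since the $f_j$ are linearly independent as overconvergent forms, and the $q$-expansion map is injective, there are indices $n_1,\dots,n_d$ such that the $d\times d$ matrix $A=(c_{f_j}(n_i))_{i,j}$ is invertible; its entries lie in $K$. Proposition \ref{thm:TruncatedAsymptotic} gives, for $m\gg0$,
\[
c_{F}(p^m n_i)=\sum_j a_j\lambda_j^m c_{f_j}(n_i) + O\bigl(p^{m(M+\nu)}\bigr).
\]
Here I use that convergence in the Banach norm implies coefficientwise convergence.

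The main obstacle is extracting the $a_j$ from this relation when the $\lambda_j^m$ may be large or small. Multiplying out, the vector $(a_j\lambda_j^m)_j$ equals $A^{-1}$ applied to a vector in $K^d$, up to an error of valuation at least $m(M+\nu)-C$, where $C$ depends on $A$. Since $\ord_p(\lambda_j)\le M$, dividing by $\lambda_j^m$ leaves
\[
a_j=\lambda_j^{-m}\,(\text{element of }K)+O\bigl(p^{m\nu-C}\bigr).
\]
The first term lies in $K$ because $\lambda_j\in K$. Letting $m\to\infty$ shows that $a_j$ is a limit of elements of $K$. Since $K$ is a finite extension of $\Q_p$, it is complete and hence closed in $\C_p$, so $a_j\in K$.

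It follows that $e_{\le M}(F)=\sum_j a_j f_j$ has coefficients in $K$. The only delicate points are the choice of an invertible minor of $q$-expansion coefficients and the passage from norm estimates to coefficient estimates. Both are standard: the Banach norm dominates the supremum of the $q$-expansion coefficients up to a constant, which the Katz expansion \eqref{eqn:KatzExpansion} provides.
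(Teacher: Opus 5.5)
Your argument is correct, and it takes a more direct route than the paper's proof.

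\textbf{The paper's route.} The paper first isolates each eigenvalue with the operator $T_j=\prod_{\lambda_\ell\neq\lambda_j}(U_p-\lambda_\ell)$. It expands $T_j=\sum_\mu b_\mu U_p^\mu$ with $\ord_p(b_\mu)\geq 0$ and combines this with Proposition \ref{thm:TruncatedAsymptotic}. This shows that $\alpha_j^{-1}\lambda_j^{-m}F|T_j|U_p^m$ converges in the Banach norm to the eigencomponent $\sum_{\lambda_\ell=\lambda_j}a_\ell(F)f_\ell$. Only then does the paper invert a coefficient matrix, for the $d_j$ eigenfunctions that share the eigenvalue $\lambda_j$.

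\textbf{Your route.} You skip the projectors entirely. A single invertible $d\times d$ matrix $A$ of $q$-coefficients recovers the whole vector $(a_j\lambda_j^m)_j$ from $(c_F(p^m n_i))_i$, up to an error of valuation at least $m(M+\nu)-C$. The division by $\lambda_j^m$ happens componentwise after applying $A^{-1}$, so the different sizes of the $\lambda_j$ never interact. The bound $\ord_p(\lambda_j)\leq M$ then leaves an error of valuation at least $m\nu-C$, just as the paper ends with $m\nu-\ord_p(\alpha_j)$.

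\textbf{Comparison.} The paper's version gives an intermediate statement at the level of forms: each eigencomponent of $e_{\leq M}(F)$ is a norm limit of the $K$-rational forms $\alpha_j^{-1}\lambda_j^{-m}F|T_j|U_p^m$. However, the paper only uses this afterwards through $q$-coefficients, so it is not needed for the proposition. Your version works with finitely many coefficients throughout and avoids the $T_j$ and $b_\mu$ bookkeeping. Both arguments rest on the same four ingredients:
\begin{itemize}
\item Proposition \ref{thm:TruncatedAsymptotic};
\item the fact that the Banach norm controls the $q$-expansion coefficients;
\item injectivity of the $q$-expansion, to choose the $n_i$;
\item the closedness of $K$ in $\C_p$.
\end{itemize}
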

\begin{proof} 
If $d=0$, then we have $e_{\leq M}(F)=0$, so the claim holds trivially. 

Next suppose that $d\in\N$ and let $f_1,\dots,f_d$ be eigenfunctions spanning $B^{[y]}$ for $y\leq M$. We split $F$ as
\begin{equation}\label{eqn:splitF}
F=e_{\leq M}(F)+ \left(F-e_{\leq M}(F)\right).
\end{equation}
Since $f_1,\dots,f_d$ span the subspace of slopes $y\leq M$ over $\C_p$ and $e_{\leq M}(F)$ is contained in this space, there exist $a_1(F),\dots,a_d(F)\in\C_p$ such that 
\begin{equation}\label{eqn:e<=Mgenerators}
e_{\leq M}(F)=\sum_{j=1}^d a_j(F) f_j.
\end{equation}
By assumption, each $f_j$ has coefficients in $K$, so it suffices to show that $a_j(F)\in K$. For this, we fix $1\leq j\leq d$ and show that $a_j(F)\in K$.  Let $\lambda_j$ be the eigenvalue of $f_j$. Define
\begin{equation}\label{eqn:Tjtdef}
T_{j}:=\prod_{\substack{1\leq \ell\leq d\\ \lambda_{\ell}\neq \lambda_j}}\left(U_p-\lambda_{\ell}\right),
\end{equation}
which is a scalar multiple of the projection into the eigenspace containing $f_j$ if restricted to the finite-dimensional space spanned by $f_1,\dots,f_d$. Since $T_{j}$ is a polynomial in $U_p$, it commutes with $U_p$ and it is also a linear operator. Using the fact that $T_{j}$ and $U_p$ are both linear and commute, \eqref{eqn:splitF} implies that 
\begin{align}
\nonumber F_{j,m}&:=F|T_{j}|U_p^m= e_{\leq M}(F)|T_{j}|U_{p}^m + \left(F-e_{\leq M}(F)\right)|T_{j}|U_{p}^m\\
&= e_{\leq M}(F)|T_{j}|U_{p}^m + \left(F-e_{\leq M}(F)\right)|U_{p}^m|T_{j}.\label{eqn:F|TjUpm}
\end{align}

First note that $F_{j,m}$ has coefficients in $K$ because $F$ has its coefficients in $K$ and the operators $T_{j}$ and $U_{p}^m$ both preserve this property (as $\lambda_{\ell}\in K$). 
To evaluate the right-hand side of \eqref{eqn:F|TjUpm}, we expand the first and second terms in two different ways. Expanding $T_{j}=\sum_{\mu=0}^{s} b_{\mu} U_p^{\mu}$, the second term on the right-hand side of \eqref{eqn:F|TjUpm} can be written as 
\[
\left(F-e_{\leq M}(F)\right)|U_{p}^m|T_{j}=\sum_{\mu=0}^s b_{\mu}\left(F-e_{\leq M}(F)\right)|U_{p}^{m+\mu}.
\]
Since $b_{\mu}$ is a linear combination of products of $\lambda_{\ell}$ (homogeneous with degeree $s-\mu$) and $\ord_{p}(\lambda_{\ell})\geq 0$, we see that $\ord_{p}(b_{\mu})\geq 0$, so Proposition \ref{thm:TruncatedAsymptotic} implies that, for $m\gg 0$,  
\[
\nu_{p}\left(b_{\mu}\left(F-e_{\leq M}(F)\right)|U_{p}^{m+\mu}\right)\geq m(M+\mu+\nu).
\]
Thus, for $m\gg 0$, we conclude that  
\begin{align}\label{eqn:ordsecondTj}
\nu_p\left(\left(F-e_{\leq M}(F)\right)|U_{p}^m|T_{j}\right)&=\nu_p\left(\sum_{\mu=0}^s b_{\mu}\left(F-e_{\leq M}(F)\right)|U_{p}^{m+\mu}\right)\nonumber\\
&\geq m(M+\nu).
\end{align}
Plugging \eqref{eqn:ordsecondTj} into \eqref{eqn:F|TjUpm}, for $m\gg 0$, we conclude that 
\begin{equation}\label{eqn:Fjtmcong}
\nu_p\left(F_{j,m}- e_{\leq M}(F)|T_{j}|U_{p}^m\right)\geq m(M+\nu).
\end{equation}
We next simplify the second term in \eqref{eqn:Fjtmcong}. Plugging in \eqref{eqn:e<=Mgenerators}, we have
\[
e_{\leq M}(F)|T_{j}|U_{p}^m= \sum_{\ell=1}^d a_{\ell}(F) f_{\ell}|T_{j}|U_{p}^m.
\]
If $\lambda_{\ell}\neq \lambda_{j}$, then $f_{\ell}|T_{j}=0$, so 
\[
e_{\leq M}(F)|T_{j}|U_p^m=\sum_{\substack{1\leq \ell\leq d\\ \lambda_{\ell}=\lambda_j}}a_{\ell}(F)f_{\ell}|\prod_{\substack{1\leq L\leq d\\ \lambda_{L}\neq \lambda_{\ell}}}\left(U_p-\lambda_{L}\right)|U_p^m.
\]
On the other hand, if $\lambda_{\ell}=\lambda_j$, then $f_{\ell}|U_p^m=\lambda_j^m f_{\ell}$. Hence, setting $\alpha_j:=\prod_{\substack{1\leq L\leq d\\ \lambda_{L}\neq \lambda_{j}}} \,(\lambda_{j}-\lambda_{L})$, we conclude that 
\begin{equation}\label{eqn:firstTjm}
e_{\leq M}(F)|T_{j}|U_{p}^m=\alpha_j\lambda_{j}^{m}\sum_{\substack{1\leq \ell\leq d\\ \lambda_{\ell}=\lambda_j}}a_{\ell}(F)f_{\ell}.
\end{equation}
Plugging this into \eqref{eqn:Fjtmcong} yields 
\[
\nu_{p}\left(F_{j,m}- \alpha_j\lambda_{j}^{m}\sum_{\substack{1\leq \ell\leq d\\ \lambda_{\ell}=\lambda_j}}a_{\ell}(F)f_{\ell}\right)\geq  m(M+\nu).
\]
We then divide by $\alpha_j\lambda_j^m$ to conclude that, for $m\gg 0$, 
\[
\nu_p\left(\alpha_j^{-1}\lambda_j^{-m}F_{j,m}- \sum_{\substack{1\leq \ell\leq d\\ \lambda_{\ell}=\lambda_j}}a_{\ell}(F)f_{\ell}\right)\geq m\left(M+\nu-\ord_{p}\left(\lambda_j\right)\right)-\ord_p\left(\alpha_j\right).
\]
Since $\ord_{p}(\lambda_j)\leq M$, for $m\gg 0$ we conclude that
\[
\nu_p\left(\alpha_j^{-1}\lambda_j^{-m}F_{j,m}- \sum_{\substack{1\leq \ell\leq d\\ \lambda_{\ell}=\lambda_j}}a_{\ell}(F)f_{\ell}\right)\geq  m\nu-\ord_p\left(\alpha_j\right).
\]
Taking $m\to\infty$, we see that 
\[
\lim_{m\to\infty}\left(\alpha_j^{-1}\lambda_j^{-m}F_{j,m}\right)=\sum_{\substack{1\leq \ell\leq d\\ \lambda_{\ell}=\lambda_j}}a_{\ell}(F)f_{\ell},
\]
where the limit is taken in the $p$-adic norm in the Banach space. The coefficients in the $q$-expansion of $\alpha_j^{-1}\lambda_j^{-m}F_{j,t,m}$ must also converge to the coefficients in the $q$-expansion of $\sum_{\substack{1\leq \ell\leq d\\ \lambda_{\ell}=\lambda_j}}a_{\ell}(F)f_{\ell}$ (see e.g. \cite[Exercise 3.4.5]{Calegari}). We hence conclude that for any $n\in\N$ we have 
\begin{equation}\label{eqn:congalmost}
\lim_{m\to\infty} \left(\alpha_j^{-1}\lambda_j^{-m}c_{F_{j,m}}(n)\right)=\sum_{\substack{1\leq \ell\leq d\\ \lambda_{\ell}=\lambda_j}}a_{\ell}(F)c_{f_{\ell}}(n).
\end{equation}
Without loss of generality, we assume that $\lambda_{\ell}=\lambda_j$ for $1\leq \ell\leq d_j$, where $d_j$ is the number of such $\ell$. Since $f_1,\dots,f_{d_j}$ are linearly independent, we may choose $n_1,\dots,n_{d_j}$ to construct an invertible $d_j\times d_j$ matrix $A$ whose $\ell$-th column is given by $(c_{f_{\ell}}(n_1),c_{f_{\ell}}(n_2),\dots,c_{f_{\ell}}(n_{d_j}))^T$.
Defining the vector 
\begin{equation*}
\bm{C_m}:=\left(
\frac{c_{F_{j,m}}\left(n_1\right)}{\alpha_j\lambda_j^m},
\frac{c_{F_{j,m}}\left(n_2\right)}{\alpha_j\lambda_j^m},
\dots,
\frac{c_{F_{j,m}}\left(n_{d_j}\right)}{\alpha_j\lambda_j^m}
\right)^T.
\end{equation*}
and letting $\bm{a}$ be the vector whose $\ell$-th component is $a_{\ell}(F)$, \eqref{eqn:congalmost} implies that 
\[
\lim_{m\to\infty}\left(A^{-1} \bm{C_m}\right)=\bm{a}.
\]
Since $A$ and $\bm{C_m}$ both have coefficients in $K$, the components $a_{\ell,m}(F)$ of the vector $A^{-1}\bm{C_m}$ lie in $K$ as well. Thus the sequence $a_{\ell,m}(F)\in K$ converge to $a_{\ell}(F)$ in $\C_p$. However, $K$ is a finite extension of $\Q_p$, and it is well-known (for example, see \cite[Proposition 3 of Chapter II]{Serre}) that a finite extension of $\Q_p$ is complete, so a convergent sequence of elements of a finite extension must converge to an element of the same field. This implies that $a_{\ell}(F)\in K$. In particular, $a_j(F)\in K$. Since $j$ is arbitrary, we conclude that $a_j(F)\in K$ for all $1\leq j\leq d$, and hence 
\[
e_{\leq M}(F)=\sum_{j=1}^d a_j(F) f_{j}
\]
has coefficients in $K$ as well.\qedhere
\end{proof}

Taking linear combinations over $\C_p$ of the basis elements of $S_{k}(\SL_2(\Z),\Z)$ (see \cite[Theorem X.4.4]{Lang_mf}), we embed $S_k(\Gamma_0(p),\overline{\Q})$ into the space $S_{k}(\Gamma_0(p),\C_p)$ spanned over $\C_p$. We call elements of $S_k(\Gamma_0(p)\C_p)$ \begin{it}classical modular forms\end{it} and recall that space $S_{k}(\Gamma_0(p),\C_p)$ is contained in the space of overconvergent modular forms $M_k^{\dg}(r)$. Gouv\^ea conjectured in  \cite[Conjecture 3]{Gouvea_cont} that all small slopes of overconvergent modular forms come from forms in $S_{k}(\Gamma_0(p),\C_p)$, which was proven by Coleman \cite{Coleman_cl1}. We cite a special case of the result as the following proposition.
\begin{prop}\label{thm:smallslopes}
Suppose that $k\in\N$, let $p$ be a prime, and $r\in\C_p$. If $F\in M_k^{\dg}(r)$ has slope $y\leq k-1$, then $F\in S_{k}(\Gamma_0(p),\C_p)$. 
\end{prop}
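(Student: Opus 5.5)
The plan is to deduce the statement from Coleman's classicality theorem \cite{Coleman_cl1}. I would first reduce to generalized eigenvectors. The space $B^{[y]}$ is finite-dimensional and $U_p$-stable, so it suffices to treat $F$ with $(U_p-\lambda)^n F=0$ and $\ord_p(\lambda)=y\le k-1$. Here $\lambda\neq 0$, because the kernel of $U_p$ is trivial by \eqref{eq_growth_inequ}.

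\textbf{Analytic continuation.} Write $F=\lambda^{-1}F|U_p$, modified by the nilpotent part in the generalized case. Since $U_p$ improves overconvergence (it maps $M_k^\dg(r)$ into $M_k^\dg(r^p)$ on the region where $\ord_p(g)$ is small), iterating this identity extends $F$ to the whole ordinary locus and to annuli around each supersingular disc with radius arbitrarily close to $1$. The slope bound controls how fast the continued sections grow as one approaches the supersingular discs.

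\textbf{Cohomological comparison, for $y<k-1$.} Use the operator $\theta^{k-1}=(q\,\tfrac{d}{dq})^{k-1}\colon M^\dg_{2-k}\to M^\dg_k$. It satisfies
\[
U_p\circ\theta^{k-1}=p^{k-1}\,\theta^{k-1}\circ U_p .
\]
Coleman identifies $M_k^\dg/\theta^{k-1}M_{2-k}^\dg$ with a rigid cohomology group $H^1_{\mathrm{rig}}$ of the ordinary locus with coefficients in $\mathrm{Sym}^{k-2}$. That group is compared with the de Rham cohomology of $X_0(p)$, and Eichler--Shimura together with the $U_p$-action shows that the characteristic series of $U_p$ on the quotient equals the classical one. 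Every element of $\theta^{k-1}M^\dg_{2-k}$ has slope at least $k-1$. Hence, for $y<k-1$, the generalized slope-$y$ space of $M_k^\dg$ injects into the quotient. Its dimension therefore equals the dimension of the classical slope-$y$ subspace, which already sits inside $M_k^\dg(r)$, and so the two spaces coincide.

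\textbf{Boundary case $y=k-1$.} This is the step I expect to be the main obstacle, since the dimension count above no longer separates classical forms from the image of $\theta^{k-1}$. I would decompose a slope-$(k-1)$ generalized eigenform as $F=F_0+\theta^{k-1}G$. Here $F_0$ is classical, obtained by applying the comparison above to the slope-$(k-1)$ part of the quotient, and $G\in M_{2-k}^\dg$ is ordinary of slope $0$. The task is then to show that $\theta^{k-1}G$ is classical. I would first try Hida theory for trivial tame level. The ordinary overconvergent forms of weight $2-k$ are specializations of $\Lambda$-adic families, and these are detected by their classical members of weight $\equiv 2-k \pmod{p-1}$. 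For the Eisenstein family, $\theta^{k-1}$ of the ordinary weight-$(2-k)$ Eisenstein series is the critical stabilization $E_k(\tau)-p^{k-1}E_k(p\tau)$, which is classical on $\Gamma_0(p)$. For cuspidal ordinary families one must show that either $\theta^{k-1}G$ is classical or no such family exists. This is exactly where I expect difficulty: classicality at slope $k-1$ can genuinely fail for non-CM $G$, so the argument may need the extra structure available in the paper's setting. In that case I would fall back on the statement for $y<k-1$, which covers the slopes $\le\frac{k-2}{2}$ used later.

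A further caveat: the Eisenstein stabilizations show that the conclusion should really read $M_k(\Gamma_0(p),\C_p)$ rather than $S_k(\Gamma_0(p),\C_p)$.
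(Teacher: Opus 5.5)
Your route is the paper's. The paper gives no argument beyond citing Coleman's classicality theorem, and your reduction to generalized eigenvectors plus the slope count on $M_k^\dg/\theta^{k-1}M_{2-k}^\dg$ outlines Coleman's proof for $y<k-1$. (The analytic-continuation paragraph is not needed on that cohomological route.)

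More importantly, your two caveats are correct. They are errors in the statement as printed, not gaps in your argument.
\begin{itemize}
\item For $k\ge 4$, the form $E_k-p^{k-1}E_k|V_p$ has $U_p$-eigenvalue $1$, hence slope $0$. Its constant term is $1-p^{k-1}\neq 0$, so it is not in $S_k(\Gamma_0(p),\C_p)$.
\item At $y=k-1$ the implication fails outright; take $p=11$, $k=10$. The form $\Delta$ is $11$-ordinary, since its $11$th coefficient is $534612\equiv 1\pmod{11}$, and $-8\equiv 12\pmod{10}$. So $M_{-8}^\dg$ contains two linearly independent slope-$0$ eigenforms: the ordinary Eisenstein series and a cuspidal form $G$ from the Hida family of $\Delta$. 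Since $\theta^9$ is injective in weight $-8$, $M_{10}^\dg$ contains a two-dimensional space of slope-$9$ forms. But $S_{10}=\{0\}$ and level-$11$ newforms have slope $4$, so the classical slope-$9$ part of $M_{10}(\Gamma_0(11))$ is spanned by $E_{10}-E_{10}|V_{11}$ alone. That form is already a multiple of $\theta^9$ of the Eisenstein series, so $\theta^9G$ is a non-classical form of slope exactly $k-1$.
\end{itemize}
What Coleman proved, and what is true, is that slope $<k-1$ implies $F\in M_k(\Gamma_0(p),\C_p)$. Falling back to that version is the right resolution.

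Two smaller corrections.
\begin{itemize}
\item The critical Eisenstein stabilization is $E_k-E_k|V_p$, with eigenvalue $p^{k-1}$. It is this form, up to a scalar, that equals $\theta^{k-1}$ of the ordinary weight-$(2-k)$ Eisenstein series. The form $E_k-p^{k-1}E_k|V_p$ you wrote is the ordinary stabilization, and it cannot lie in the image of $\theta^{k-1}$ because its constant term is nonzero.
\item Because the corrected conclusion is $M_k$ rather than $S_k$, saying it ``covers the slopes used later'' needs one extra line in the proof of Theorem \ref{thm_average_rate}. The only Eisenstein slope below $k-1$ is $0$. Since $e_0=\lim_n U_p^{n!}$ and $U_p$ preserves constant terms, $e_0(r^DF)$ has constant term $0$. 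It therefore has no component along $E_k-p^{k-1}E_k|V_p$, whereas the cuspidal ordinary stabilizations all have constant term $0$. Hence the projection used there still lies in $S_k^{\mathrm{old}}(\Gamma_0(p),\C_p)$.
\end{itemize}
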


The small slopes for weights $k$ and $\kappa\equiv k\pmod{(p-1)p^n}$ for $n$ sufficiently large coincide by another result of Coleman \cite[Theorem A, Theorem D]{Coleman_B}. A quantifying refinement of this statement was given by Wan \cite{Wan}, where a bound was given on how large $n$ needs to be. For $k=10$, the space $S_k$ is empty, and thus the space $S_k(\Gamma_0(p))$ consists of newforms, all eigenforms of the $U_p$-operator with eigenvalues $\pm p^{\frac{k-2}{2}} = p^4$ (see \cite[Theorem 4.6.17 (2)]{Miyake}). We thus expect that, for $\kappa = k \pmod{(p-1)p^n}$ with $n$ sufficiently large, the slope sequence starts with four. Let $p=5$ and pick $\kappa= 10+4\cdot5^2 = 110$.
Looking at the tables calculated by Gouv\^ea \cite{Gouvea_tables}, we find the slope sequence for $p=5$ at weight $110$ to be $4, 4, 4, 9, 10, 13, 14, 15$, and that indeed starts with four.
Alternatively, pick $p=3$, and $\kappa = 10+2\cdot3^5=496$. The slope sequence $p=3$ at weight $496$ starts with $4, 4, 9, 13, 13, 18, 23, 23, 27 \ldots$, and the leading (smallest) slope is again four as predicted.

Our  proof of Theorem \ref{thm_average_rate} consists of two observations. Firstly, we note that (a multiple of) $f$ belongs to $M_k^{\dg}(r)$ for any $r$. Then, we  observe that the slope sequence for $M_k^{\dg}(r)$ contains the slope $\frac{k-2}{2}$ and use Proposition \ref{thm:smallslopes} to conclude that all elements of $M_k^{\dg}(r)$ whose slopes are smaller than  $\frac{k-2}{2}$ must come from applying appropriate operators to elements of $S_k$. Namely, if $f\in S_{k}$ is a Hecke eigenform with eigenvalue $\lambda=\alpha+\beta$ with $\alpha$ and $\beta$ roots of the corresponding Hecke polynomial, then one naturally obtains eigenfunctions of $U_p$ by taking $f-\alpha f|V_p,\ f-\beta f|V_p\in S_{k}(\Gamma_0(p))$.
\begin{lem}\label{lem:Feigen}
Suppose that $f\in S_{k}$ is a Hecke eigenform and $\alpha$ and $\beta$ are the corresponding roots of the Hecke polynomial. Then 
\begin{equation*}
\left(f-\alpha f|V_p\right)|U_p=\beta\left(f-\alpha f|V_p\right),\quad
 \left(f-\beta f|V_p\right)|U_p=\alpha\left(f-\beta f|V_p\right).
\end{equation*}
\end{lem}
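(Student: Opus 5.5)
We verify the two identities directly on $q$-expansions, using the relation $T_p=U_p+p^{k-1}V_p$ recorded in the preliminaries together with the elementary identity $(h|V_p)|U_p=h$, valid for any $q$-series $h$. Here $f$ is a Hecke eigenform with $f|T_p=\lambda f$, and $\alpha,\beta$ are the roots of the Hecke polynomial $X^2-\lambda X+p^{k-1}$. Hence
\[
\alpha+\beta=\lambda, \qquad \alpha\beta=p^{k-1}.
\]

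The first step is to compute $f|U_p$. Rearranging the definition of $T_p$ gives
\[
f|U_p=f|T_p-p^{k-1}f|V_p=\lambda f-p^{k-1}f|V_p .
\]
The second step is to compute $(f|V_p)|U_p$. Since $f|V_p=\sum_n c_f(n)q^{pn}$, extracting the coefficients with index divisible by $p$ returns $\sum_n c_f(n)q^n$. Thus $(f|V_p)|U_p=f$.

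Combining these, and using linearity of $U_p$, we obtain
\[
(f-\alpha f|V_p)|U_p=\lambda f-p^{k-1}f|V_p-\alpha f=(\lambda-\alpha)f-\alpha\beta\, f|V_p .
\]
Substituting $\lambda-\alpha=\beta$ and $p^{k-1}=\alpha\beta$, the right-hand side becomes $\beta f-\alpha\beta f|V_p=\beta(f-\alpha f|V_p)$, which is the first identity. The second identity follows by interchanging the roles of $\alpha$ and $\beta$, since both the Hecke polynomial and the relations $\alpha+\beta=\lambda$, $\alpha\beta=p^{k-1}$ are symmetric in $\alpha$ and $\beta$.

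There is no genuine obstacle; the computation is purely formal. The only point needing care is the normalization of the Hecke polynomial. The constant term must be $p^{k-1}$, matching the convention $T_p=U_p+p^{k-1}V_p$ used in the paper, so that $\alpha\beta=p^{k-1}$ holds. With that fixed, the argument goes through as written. The resulting forms $f-\alpha f|V_p$ and $f-\beta f|V_p$ lie in $S_k(\Gamma_0(p))$, since $f|V_p$ is modular on $\Gamma_0(p)$.
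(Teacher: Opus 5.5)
Your proof is correct and follows the paper's argument. Like the paper, you rewrite $f|U_p=\lambda f-p^{k-1}f|V_p$ using $T_p=U_p+p^{k-1}V_p$, use that applying $V_p$ and then $U_p$ gives the identity, and substitute $\lambda=\alpha+\beta$ and $p^{k-1}=\alpha\beta$.
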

\begin{proof}
If the Hecke eigenvalue of $f$ under $T_p$ is $\lambda$, then we have
\[
\lambda f=f\big|T_p=f|U_p+p^{k-1}f|V_p.
\]
Thus
\begin{equation}\label{eqn:FjUp}
f|U_p=\lambda f - p^{k-1} f|V_p.
\end{equation}
 Then $\alpha\beta=p^{k-1}$, $\alpha+\beta=\lambda$, and, since $V_p\circ U_p$ is the identity, we conclude from \eqref{eqn:FjUp} that
\begin{align*}
\nonumber \left(f-\alpha f|V_p\right)|U_p&=\beta f-\alpha\beta f|V_p=\beta\left(f-\alpha f|V_p\right),\\
\nonumber \left(f-\beta f|V_p\right)|U_p&=\alpha f-\alpha\beta f|V_p=\alpha\left(f-\beta f|V_p\right).\qedhere
\end{align*}
\end{proof}
By Lemma \ref{lem:Feigen}, $\ord_p(\beta)$ and $\ord_p(\alpha)$ are slopes of elements of $S_{k}(\Gamma_0(p))$. We assume throughout that $\ord_p(\alpha)\leq \ord_p(\beta)$. Since $S_k$ has a basis of elements in $S_k(\SL_2(\Z),\Z)$, one can normalize a Hecke eigenform so that all of its Fourier coefficients are $p$-integral (see \cite[Theorem X.4.4]{Lang_mf}). The elements from Lemma \ref{lem:Feigen} then give a nice basis of eigenforms under $U_p$ for $S_{k}^{\operatorname{old}}(\Gamma_0(p),\C_p):=S_{k}(\C_p)\oplus S_{k}(\C_p)|V_p$.
\begin{lem}\label{lem:oldspacebasis}
There exists a field $K_p/\Q_p$ which is a finite Galois extension such that the space $S_k^{\operatorname{old}}(\Gamma_0(p),\C_p)$ has a basis of eigenfunctions under $U_p$ with eigenvalues and coefficients in $K_p$.
\end{lem}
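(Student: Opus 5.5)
The plan is to build the basis from a Hecke eigenbasis of $S_k$, using the $p$-stabilizations of Lemma \ref{lem:Feigen}. Since $S_k$ has a basis in $S_k(\SL_2(\Z),\Z)$ and the Hecke operators $T_n$ preserve $S_k(\SL_2(\Z),\Z)$ and commute, I will first take a basis $g_1,\dots,g_t$ of normalized Hecke eigenforms of $S_k$; this is possible by multiplicity one at level one, via the Petersson inner product over $\C$. Their Fourier coefficients are algebraic integers lying in the number field $L$ generated by all Hecke eigenvalues. This field is finite over $\Q$: the characteristic polynomial of $T_n$ on $S_k(\SL_2(\Z),\Z)$ has integer coefficients, and $L$ is generated by the eigenvalues of finitely many $T_n$ (for instance $n\le \dim S_k$, or $T_2$ together with a Sturm-bound argument). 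For each $g_i$, let $\alpha_i,\beta_i$ be the roots of $X^2-\lambda_i X+p^{k-1}$, where $\lambda_i$ is the $T_p$-eigenvalue.

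Next I fix embeddings $\overline{\Q}\hookrightarrow\overline{\Q}_p\subset\C_p$. I take $K_p$ to be the compositum over $\Q_p$ of the images of $L$ and of all $\alpha_i,\beta_i$; equivalently, the splitting field over $\Q_p$ of the product of the characteristic polynomial of $T_p$ and the polynomials $X^2-\lambda_iX+p^{k-1}$, together with the minimal polynomials of generators of $L$. As a splitting field of a polynomial over $\Q_p$ in characteristic zero, $K_p$ is a finite Galois extension of $\Q_p$.

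Lemma \ref{lem:Feigen} then shows that $g_i-\alpha_i g_i|V_p$ and $g_i-\beta_i g_i|V_p$ are $U_p$-eigenfunctions with eigenvalues $\beta_i$ and $\alpha_i$ in $K_p$, and with coefficients in $K_p$, since $V_p$ preserves coefficient fields. It remains to check that these $2t$ forms span $S_k^{\operatorname{old}}(\Gamma_0(p),\C_p)=S_k(\C_p)\oplus S_k(\C_p)|V_p$, which has dimension $2t$ over $\C_p$ because the sum is direct. When $\alpha_i\neq\beta_i$, the two forms attached to $g_i$ span the same space as $g_i$ and $g_i|V_p$, since the change-of-basis matrix $\left(\begin{smallmatrix}1&-\alpha_i\\1&-\beta_i\end{smallmatrix}\right)$ has determinant $\alpha_i-\beta_i\neq 0$. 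Because the $g_i$ and $g_i|V_p$ together form a basis of the old space, the resulting family is a basis.

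The main obstacle is the degenerate case $\alpha_i=\beta_i$, i.e.\ $\lambda_i^2=4p^{k-1}$. Here $U_p$ may fail to be diagonalizable on the span of $g_i$ and $g_i|V_p$, and it is not if $\alpha_i\neq 0$. The cleanest route is to rule this case out. Since $\lambda_i^2=4p^{k-1}$ with $k-1$ odd forces $\lambda_i=\pm 2p^{(k-1)/2}\notin\Q$, this case requires $L\neq\Q$ and is not excluded by rationality alone. The bound $|\lambda_i|\le 2p^{(k-1)/2}$ holds in every complex embedding (Deligne), and equality in all conjugates would force $\lambda_i^2=4p^{k-1}$ exactly. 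I would try to exclude this, or, failing that, interpret "basis of eigenfunctions" as a basis of generalized eigenfunctions (a Jordan basis), which still has coefficients and eigenvalues in $K_p$: take $h=g_i-\alpha_i g_i|V_p$ together with $g_i$, where $g_i|U_p=\lambda_i g_i-p^{k-1}g_i|V_p$. This generalized version suffices for Proposition \ref{prop:ProjectField}-type arguments after a minor modification. I expect the paper either to cite that $\alpha_i\ne\beta_i$ for level one, as is known in the relevant cases, or to work with generalized eigenvectors.
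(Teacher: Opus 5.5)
Your construction is the paper's proof. The steps match: a Hecke eigenbasis $f_1,\dots,f_d$ of $S_k$ over a number field, adjoining $\alpha_j,\beta_j$, embedding into $\C_p$ and taking the Galois closure, then replacing $\{f_j,f_j|V_p\}$ by $\{f_j-\alpha_jf_j|V_p,\ f_j-\beta_jf_j|V_p\}$ via Lemma \ref{lem:Feigen}. Your guess about how the paper handles the degenerate case is wrong. The paper neither cites $\alpha_j\neq\beta_j$ nor uses generalized eigenvectors. It performs the change of basis without checking that $\alpha_j-\beta_j\neq 0$. So the point you isolate is a gap in the paper's argument as well. The proof of Theorem \ref{thm_average_rate} even treats summands with $\alpha_j=\beta_j$ separately, although the paper's two vectors would coincide and fail to span such a block.

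The route you sketch for removing the gap cannot work.
\begin{itemize}
\item If $\alpha_j=\beta_j$, then $U_p$ acts on $\operatorname{span}\{f_j,f_j|V_p\}$ by $\left(\begin{smallmatrix}\lambda_j&1\\-p^{k-1}&0\end{smallmatrix}\right)$, a nontrivial Jordan block. The old space would then have no eigenbasis, and the lemma as literally stated would fail.
\item Deligne's bound does not help: $\alpha_j=\beta_j$ is exactly equality $|\lambda_j|=2p^{(k-1)/2}$, which the bound permits.
\item Excluding this case is the semisimplicity problem studied by Coleman and Edixhoven. It is proved in weight $2$, which is vacuous at level one, and otherwise known only conditionally on the Tate conjecture.
\item Two elementary facts do hold. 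The case is impossible when $\lambda_j\in\Q$ (e.g.\ $\dim S_k=1$), since $4p^{k-1}$ is not a square. For fixed $k$ it can only occur at primes ramified in the Hecke field, since it forces $\sqrt p\in\Q(\lambda_j)$.
\end{itemize}
So the honest options are to assume $\alpha_j\neq\beta_j$, or to use your Jordan basis, which is correct. Indeed $(U_p-\alpha_j)f_j=\alpha_j(f_j-\alpha_jf_j|V_p)$, hence
\[
f_j|U_p^m=\alpha_j^mf_j+m\alpha_j^m\left(f_j-\alpha_jf_j|V_p\right).
\]
This shows such blocks still contribute only terms divisible by $p^{(k-1)m/2}$ up to a bounded loss, as the proof of Theorem \ref{thm_average_rate} needs. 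However, Proposition \ref{prop:ProjectField} as written uses $f_\ell|U_p^m=\lambda_\ell^mf_\ell$ for genuine eigenfunctions. It must be adapted to generalized eigenspaces, and that adaptation is the real content of your ``minor modification''; it should be written out.
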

\begin{proof}
Since $S_k(\SL_2(\Z),\C)$ has a basis in $S_k(\SL_2(\Z),\Z)$ and Hecke eigenforms have Fourier coefficients contained in a number field, we may construct a basis of Hecke eigenforms $f_1,\dots,f_d\in S_k(\SL_2(\Z),K)$ for some finite extension $K$ of $\Q$. We may extend $K$ to contain the roots $\alpha_j$ and $\beta_j$ of the corresponding Hecke polynomial for $f_j$. Embedding $K$ into $\C_p$ and taking the Galois closure, we obtain a field $K_p$ which is a finite Galois extension over $\Q_p$.

The space $S_k^{\operatorname{old}}(\Gamma_0(p),\C_p)$ has dimension $2d$, which can be seen by taking the basis $f_j$ and $f_j|V_p$ for $1\leq j\leq d$. We change to the basis $f_j-\alpha_j f_j|V_p$ and $f_j-\beta_j f_j|V_p$ for $1\leq j\leq d$. These all have coefficients in $K_p$ because $\alpha_j,\beta_j\in K_p$ and $f_j$ has coefficients in $K_p$. Moreover, the basis element $f_j-\alpha_j f_j|V_p$ (resp. $f_j-\beta_j f_j|V_p$) is an eigenfunction under $U_p$ with eigenvalue $\beta_j\in K_p$ (resp. $\alpha_j\in K_p$) by Lemma \ref{lem:Feigen}.
\end{proof}
Although $f$ is not itself an eigenfunction under $U_p$, repeated actions of $U_p$ on $f$ are highly congruent to repeated actions of $U_p$ on $f-\beta f|V_p$. 
\begin{lem}\label{lem:FUprepeat}
Suppose that $f\in S_{k}$ is a Hecke eigenform with Fourier coefficients in $\overline{\Q}$, normalized so that all of its Fourier coefficients are $p$-integral, and the roots $\alpha$ and $\beta$ of the corresponding Hecke polynomial are distinct and satisfy $\ord_p(\alpha)\leq \ord_p(\beta)$. Then we have
\[
f|U_p^m\equiv \frac{\alpha^{m+1}}{\alpha-\beta}\left(f-\beta f|V_p\right)\pmod{p^{\frac{m(k-1)}{2}-\varepsilon}}
\]
for some $\varepsilon\geq 0$ which is independent of $m$ and $\varepsilon=0$ if $\ord_p(\alpha)<\ord_p(\beta)$. 
\end{lem}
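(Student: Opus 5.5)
Since $\alpha\neq\beta$, we first write $f$ in the $U_p$-eigenbasis of Lemma \ref{lem:Feigen}. Solving the $2\times 2$ system gives
\[
f=\frac{\alpha}{\alpha-\beta}\left(f-\beta f|V_p\right)-\frac{\beta}{\alpha-\beta}\left(f-\alpha f|V_p\right).
\]
The coefficient check works: $\frac{\alpha-\beta}{\alpha-\beta}=1$ on $f$, and $\frac{-\alpha\beta+\alpha\beta}{\alpha-\beta}=0$ on $f|V_p$. Applying $U_p^m$ and Lemma \ref{lem:Feigen} then yields the exact identity
\[
f|U_p^m=\frac{\alpha^{m+1}}{\alpha-\beta}\left(f-\beta f|V_p\right)-\frac{\beta^{m+1}}{\alpha-\beta}\left(f-\alpha f|V_p\right).
\]
So it suffices to show that the second term has $p$-adic order at least $\frac{m(k-1)}{2}-\varepsilon$, with $\varepsilon$ independent of $m$.

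To bound that error term, note first that $f$ has $p$-integral coefficients. The roots $\alpha,\beta$ are algebraic integers, being roots of $X^2-\lambda X+p^{k-1}$ with $\lambda$ integral. Hence $f-\alpha f|V_p$ has coefficients of nonnegative valuation, and the error term has valuation at least $(m+1)\ord_p(\beta)-\ord_p(\alpha-\beta)$. From $\alpha\beta=p^{k-1}$ and $\ord_p(\alpha)\le\ord_p(\beta)$ we get $\ord_p(\beta)\geq \frac{k-1}{2}$. Therefore the valuation is at least $\frac{m(k-1)}{2}+\frac{k-1}{2}-\ord_p(\alpha-\beta)$, and we take $\varepsilon:=\max\{0,\ord_p(\alpha-\beta)-\frac{k-1}{2}\}$. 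This is finite because $\alpha\neq\beta$, and it does not depend on $m$.

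For the case $\ord_p(\alpha)<\ord_p(\beta)$, the ultrametric inequality becomes an equality: $\ord_p(\alpha-\beta)=\ord_p(\alpha)\le\frac{k-1}{2}$. The bound is then already at least $\frac{m(k-1)}{2}$, so $\varepsilon=0$. (One could do even better here, since $\ord_p(\beta)>\frac{k-1}{2}$, but this is not needed.)

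The only delicate point is interpreting the congruence when the two sides have non-integral coefficients, since $\alpha^{m+1}/(\alpha-\beta)$ might not be $p$-integral. I would read the congruence as a statement that the difference has valuation at least the stated exponent, working in $K_p$ from Lemma \ref{lem:oldspacebasis}. With that reading, the exact identity above gives the claim directly, and no overconvergent theory is required.
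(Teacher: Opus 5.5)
Your proof is correct and is essentially the paper's argument: the same decomposition of $f$ into the $U_p$-eigenvectors $f-\beta f|V_p$ and $f-\alpha f|V_p$, followed by a valuation bound on the leftover term. The only difference is bookkeeping: the paper discards $\beta^m f$ and then $\frac{\alpha\beta^m}{\alpha-\beta}(f-\beta f|V_p)$ to get $\varepsilon=\ord_p(\alpha-\beta)-\ord_p(\alpha)$, whereas you bound $\frac{\beta^{m+1}}{\alpha-\beta}(f-\alpha f|V_p)$ in one step and obtain the no larger $\varepsilon=\max\{0,\ord_p(\alpha-\beta)-\frac{k-1}{2}\}$.
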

\begin{proof}
Since $\alpha\neq \beta$, we may write 
\begin{align*}
&f=\frac{1}{\alpha-\beta}\left(\alpha \left(f-\beta f|V_p\right) -\beta\left(f-\alpha f|V_p\right)\right).
\end{align*}
Applying $U_p^m$ to both sides and using the fact that $f-\beta f|V_p$ (resp. $f-\alpha f|V_p$) has eigenvalue $\alpha$ (resp. $\beta$) under $U_p$ by Lemma \ref{lem:Feigen} yields
\begin{equation*}
f|U_{p^m} =\beta^m f + \alpha\frac{\alpha^{m}-\beta^m}{\alpha-\beta}  \left(f-\beta f|V_p\right).
\end{equation*}
Since $\ord_p(\alpha)\leq \ord_p(\beta)$ and $\ord_p(\alpha)+\ord_p(\beta)=k-1$ (as $\alpha\beta=p^{k-1}$), we have $\ord_p(\beta)\geq \frac{k-1}{2}$, so (recalling that $f$ has $p$-integral Fourier coefficients)
\[
\beta^m f + \alpha\frac{\alpha^{m}-\beta^m}{\alpha-\beta}  \left(f-\beta f|V_p\right)\equiv  \alpha\frac{\alpha^{m}-\beta^m}{\alpha-\beta}  \left(f-\beta f|V_p\right) \pmod{p^{\frac{(k-1)m}{2}}}.
\]
Setting $\varepsilon:=\ord_p(\alpha-\beta)-\ord_p(\alpha)\geq 0$ (with $\varepsilon=0$ if $\ord_p(\beta)>\ord_p(\alpha)$) then gives the claim. 
\end{proof}
\section{Proof of Theorem \ref{thm_average_rate} and Corollary \ref{cor_main}}\label{sec:thmaverage}

As noted in the introduction, Corollary  \ref{cor_main} follows from Theorem \ref{thm_average_rate} immediately, so we only need to prove Theorem \ref{thm_average_rate}.

\begin{proof}[Proof of Theorem \ref{thm_average_rate}] 

Assume that the pole of $F$ at $\mathfrak{z}$ has order $D>0$.  We let $g$ be the element of $M_{p-1}$ satisfying the properties from  Lemma \ref{lem:supersingular}. Note that $Fg^D$ is a holomorphic cusp form of weight $k+D(p-1)$ defined over $\Z$ (because both $F$ and $g$ are defined over $\Z$). Writing $r^DF= \frac{r^D Fg^D}{g^D}$ we conclude that $r^D F \in M_k^{\dg}(r)$ (for any $r$ with $\ord_p(r)>0$) since it has a Katz expansion \eqref{eqn:KatzExpansion} (which consists of only one non-zero term with $b_D = Fg^D$).

We now pick $r$ such that \eqref{eq_growth_inequ} holds, and consider the slope sequence for $M_k^{\dg}(r). \text{ We have } S_k(\Gamma_0(p),\C_p) \subset M_k^{\dg}(r)$ and therefore the slope sequence contains all of the slopes coming from Hecke eigenforms in $S_k$ via Lemma \ref{lem:Feigen} and newforms of level $p$. Moreover, it follows from Proposition \ref{thm:smallslopes} that, since $\frac{k-2}{2}< k-1$, all elements in the slope sequence which are smaller than $\frac{k-2}{2}$ come from these classical modular forms. In other words, in the truncated asymptotic expansion \eqref{eq:tr_as} with $M=\frac{k-2}{2}-\delta$ (for some sufficiently small $\delta>0$ so that there are no slopes between $M$ and $\frac{k-2}{2}$) 
\begin{equation}\label{eqn:project}
e_{ <\frac{k-2}{2}} \left(r^D F\right) = \sum_{\substack{y\in\mathcal{Y}\\ \ord_p(y)<\frac{k-2}{2}}} e_y\left(r^DF\right),
\end{equation}
 Proposition \ref{thm:smallslopes} implies that all projections $e_y(r^D F)$ lie in $S_{k}(\Gamma_0(p),\C_p)$. The space $S_k^{\text{new}}(\Gamma_0(p))$ of weight $k$ newforms admits a basis which consists of Hecke eigenforms, and the slope of any weight $k$ Hecke newforms is $\frac{k-2}{2}$ by \cite[Theorem 4.6.17 (2)]{Miyake}. Hence the projection \eqref{eqn:project} only contains elements from the old space $S_{k}^{\operatorname{old}}(\Gamma_0(p),\C_p)$. By Lemma \ref{lem:oldspacebasis}, there exists a finite Galois extension $K_p/\Q_p$ such that \eqref{eqn:project} is spanned by a basis $f_1,\dots,f_d$ of eigenfunctions under $U_p$ with eigenvalues and coefficients in $K_p$. We may normalize the generators $f_1,\dots,f_d$ so that their coefficients are $p$-integral. Since $F$ has coefficients in $\Z\subset \Q_p\subseteq K_p$, Proposition \ref{prop:ProjectField} implies that the coefficients of $F_2:=e_{ <\frac{k-2}{2}} (r^D F)$ lie in $K_p$. Since $F_2\in S_{k}^{\operatorname{old}}(\Gamma_0(p),K_p)=S_{k}(K_p)\oplus S_{k}(K_p)|V_p$, we may decompose
\begin{equation}\label{eqn:decompose}
F_2=\sum_{j=1}^d \left(c_j\left(f_j-\alpha_jf_j|V_p\right)+d_j\left(f_j-\beta_jf_j|V_p\right)\right),
\end{equation}
with $c_j,d_j\in K_p$ and $\alpha_j,\beta_j$ the roots of the corresponding Hecke polynomial.

By the second statement of Proposition \ref{thm:TruncatedAsymptotic}, we have 
\begin{equation}\label{eqn:TruncateF-F2}
\left(F-F_2\right)|U_p^m\equiv 0\pmod{p^{\frac{(k-2)m}{2}-\varepsilon}}.
\end{equation}
We next show that there exist $g\in S_{k}(\SL_2(\Z),K_p)$ and $\varepsilon\geq 0$ such that 
\begin{equation}\label{eqn:f2FUprepeat}
F_2|U_p^m\equiv g|U_p^m\pmod{p^{\frac{(k-1)m}{2}-\varepsilon}}
\end{equation}
for all $m\in\N_0$ (and hence combining with the slightly weaker congruence in \eqref{eqn:TruncateF-F2} gives $(F-g)|U_{p}^m\equiv 0\Pmod{p^{\frac{(k-2)m}{2}-\varepsilon}}$). To construct $g$, we apply $U_p^{m}$ to both sides of \eqref{eqn:decompose}. If $\alpha_j=\beta_j$, then $\ord_p(\alpha)=\ord_p(\beta)=\frac{k-1}{2}$, so Lemma \ref{lem:Feigen} implies that, for some $\varepsilon\geq 0$,  
\[
\left(c_j\left(f_j-\alpha_jf_j|V_p\right)+d_j\left(f_j-\beta_jf_j|V_p\right)\right)|U_{p}^m\equiv 0\pmod{p^{\frac{(k-1)m}{2}-\varepsilon}}.
\]
For $\alpha_j\neq \beta_j$, we use the fact that $V_p\circ U_p$ is the identity to obtain 
\begin{multline*}
F_2|U_p^m \equiv \sum_{\substack{1\leq j\leq d\\ \alpha_j\neq \beta_j}} \Big(c_j\left(f_j|U_{p}^m-\alpha_jf_j|U_p^{m-1}\right)\\[-10pt]
+d_j\left(f_j|U_{p}^m-\beta_jf_j|U_p^{m-1}\right)\Big)\pmod{p^{\frac{(k-1)m}{2}-\varepsilon}}.
\end{multline*}
By Lemma \ref{lem:FUprepeat}, we conclude that  
\begin{align}
\nonumber F_2|U_p^m &\equiv \sum_{\substack{1\leq j\leq d\\ \alpha_j\neq \beta_j}} \Bigg(c_j\left(\frac{\alpha_j^{m+1}}{\alpha_j-\beta_j}\left(f_j-\beta_j f_j|V_p\right)-\frac{\alpha_j^{m+1}}{\alpha_j-\beta_j}\left( f_j-\beta_j f_j|V_p\right)\right)\\
\nonumber &\qquad \quad\qquad +d_j\left(\frac{\alpha_j^{m+1}}{\alpha_j-\beta_j} \left(f_j-\beta_j f_j|V_p\right)-\frac{\alpha_j^m\beta_j}{\alpha_j-\beta_j}\left(f_j-\beta_j f_j|V_p\right)\right)\Bigg)\\
\label{eqn:f2Uprepeat} &\equiv \sum_{\substack{1\leq j\leq d\\ \alpha_j\neq \beta_j}} d_j \alpha_j^{m}\left(f_j-\beta_j f_j|V_p\right)\pmod{p^{\frac{(k-1)(m-1)}{2}-\varepsilon}}.
\end{align}
Setting 
\[
h:=\sum_{j=1}^d d_j\frac{\alpha_j-\beta_j}{\alpha_j} f_j\in S_{k}(\SL_2(\Z),K_p),
\]
we note that the terms with $\alpha_j=\beta_j$ disappear and we may again use Lemma \ref{lem:FUprepeat} and then compare with \eqref{eqn:f2Uprepeat} to obtain 
\begin{equation*}
h|U_p^m =\sum_{\substack{1\leq j\leq d\\ \alpha_j\neq \beta_j}} d_j \frac{\alpha_j-\beta_j}{\alpha_j} f_j|U_p^m
\equiv  \sum_{\substack{1\leq j\leq d\\ \alpha_j\neq \beta_j}} d_j \alpha_j^{m} f_j\equiv F_2|U_p^m\pmod{p^{\frac{(k-1)(m-1)}{2}-\varepsilon}},
\end{equation*}
verifying \eqref{eqn:f2FUprepeat} with $\varepsilon\mapsto \varepsilon+\max\{\frac{k-1}{2}-\frac{m}{2},0\}$. 

We therefore obtain 
\[
\left(F-h\right)|U_p^m\equiv 0\pmod{p^{\frac{(k-1)(m-1)}{2}-\varepsilon}}.
\]
We next take the Galois trace from $K_p$ down to $\Q_p$ on the above congruence Since $F$ has coefficients in $\Z$, Galois conjugation does not affect $F$. Letting $f$ be $\frac{1}{\left[K_p:\Q_p\right]}$ times the Galois trace of $h$, we therefore obtain 
\[
\left(\left[K_p:\Q_p\right] F-\left[K_p:\Q_p\right] f\right)|U_p^m\equiv 0\pmod{p^{\frac{(k-1)(m-1)}{2}-\varepsilon}}.
\]
Dividing both sides by $[K_p:\Q_p]$ and letting $\varepsilon\mapsto \varepsilon + \ord_p([K_p:\Q_p])$, we obtain 
\[
\left(F-f\right)|U_p^m\equiv 0\pmod{p^{\frac{(k-1)(m-1)}{2}-\varepsilon}}. 
\]
Since $f$ is $\frac{1}{\left[K_p:\Q_p\right]}$ times the Galois trace of $h$ from $K_p$ down to $\Q_p$, its Fourier coefficients lie in $\Q_p$. Finally, since $S_k$ has a basis of forms from $S_k(\SL_2(\Z),\Z)$, there exists $f_m\in S_k(\SL_2(\Z),\Q)$ with $f\equiv f_m(\!\!\!\mod{p^{\frac{m(k-2)}{2}-\varepsilon}})$, giving the second claim.\qedhere
\end{proof}

\end{document}